\newtheorem{thm}{Theorem}[section]
\newtheorem{lem}[thm]{Lemma}
\theoremstyle{definition}
\theoremstyle{remark}
\newtheorem{rem}{Remark}
\numberwithin{equation}{section}
\newcommand{\Real}{\mathbb R}
\newcommand{\eps}{\varepsilon}
\newcommand{\cG}{\mathcal{G}}
\newcommand{\cV}{\mathcal{V}}
\newcommand{\cU}{\mathcal{U}}
\newcommand{\cI}{\mathcal{I}}
\newcommand{\cE}{\mathcal{E}}
\newcommand{\cQ}{\mathcal{Q}}
\renewcommand{\phi}{\varphi}
\newcommand{\ue}{u^\eps}
\newcommand{\uep}{u^{\eps,p}}
\newcommand{\ve}{v^\eps}
\begin{document}
\hyphenation{ap-pro-xi-ma-tion}
\title[Singularly perturbed hyperbolic problems on metric graphs]
{Singularly perturbed hyperbolic problems on metric graphs: asymptotics of solutions}
\author[Yu. Golovaty and V. Flyud]{Yuriy Golovaty${}^1$, Volodymyr Flyud${}^{1,2}$}
\address{\hskip-12pt${}^{1}$  Ivan Franko National University of Lviv, Lviv, Ukraine
\newline
${}^{2}$  Opole University of Technology, Opole, Poland and Ivan Franko National University of Lviv, Lviv, Ukraine}
\email{yu\_holovaty@franko.lviv.ua, flyud@yahoo.com}

\subjclass{35R02, 35L20}%
\keywords{PDE on metric graph, hyperbolic equation, singular perturbed problem, asymptotics,
boundary layer, vibration of networks.}

\begin{abstract}
We are interested in the evolution phenomena on star-like networks composed of several branches which vary
considerably in physical properties. The initial boundary value problem for singularly perturbed hyperbolic differential equation on a metric graph is studied. The hyperbolic equation becomes degenerate on a part of the graph as a small parameter goes to zero. In addition,
the rates of degeneration may differ in different edges of the graph.
Using the boundary layer method  the complete asymptotic expansions of  solutions are constructed and justified.
\end{abstract}

\maketitle
\section{Introduction}\label{Sect1}
The boundary value problems for ordinary and partial differential ope\-ra\-tors on metric graphs
describe a wide variety of physical processes: vib\-ration and diffusion in networks, wave propagation in  wave\-guide networks,
expansion of signals in neurons etc.
Cur\-rent\-ly, there is increasing interest in  models on graphs, in particular, as a reaction to a great deal of progress in fabricating graph-like structures of the semiconductor materials, (see the survey \cite{Kuchment2004} for details). The idea to investigate the quantum dynamics of particles confined to  metric graphs originated with the study of free electron models of organic molecules \cite{KronigPenney1931, Pauling1936, Platt1949}.
Among the systems those were successfully modeled by graphs we also mention e.g., single-mode acoustic and electro-magnetic waveguide networks \cite{FlesiaJohnstonKunz1987}, the Anderson transition \cite{Anderson1981}, frac\-tion excitations in fractal structures \cite{AvishaiLuck1992}, and mesoscopic quantum systems \cite{KowalSivanEntinWohlmanImry1990}. This resulted into the significant intensification of development of ordinary differential equations as well as PDEs on the metric graphs for the last three decades and numerous publications respectively; we refer the reader to e.g.
\cite{Below1988, Mehmeti1994, MehmetiMeister1989, PokornyiBorovskikh2004, PokornyBook2004}.

It is worth pointing out that the boundary value problems for hyperbolic ope\-rators of the second order on metric graphs as well as for hyperbolic systems of conservation laws on graphs describe a diversity of physical processes.
Such problems arise for example in the modelling of  transversal vibrations of networks,   gas transportation networks,  traffic flow on road networks,  supply chain management,  water flow in open canals etc. (see  \cite{DagerZuazua2002, KostrykinPotthoffSchrader2012,BandaHertyKlar2006}).
The d'Alembert operator $\square=\partial_{t}^2-\Delta$ and the associated Klein-Gordon operator $\square+m^2$ on metric graphs play  an important role in relativistic quantum theories \cite{MehmetiRegnier2003,MehmetiHallerDintelmannRegnier2012}.

Recently, there has also been a growing interest in the singularly perturbed problems on metric graphs. This is partly motivated by  importance of such models for many applied problems in classical and quantum mechanics, theory of non-homogeneous media, scattering theory etc. The differential equations on  graphs with  small or large parameters in their coefficients  represent the natural models of various comp\-li\-cated devices with the irregular ``ramified'' geometry and heterogeneous properties (see e.g. \cite{GolovatyHrabchak2007} and \cite{GolovatyHrabchak2010}).
Also, the asymptotic analysis of Schr\"{o}dinger operators with singular perturbed potentials is  one of the most natural ways to define the Hamiltonians corresponding to  point interactions supported by a discrete set
\cite{GolovatyHryniv2013, Golovaty2013} as well as the Hamiltonians on quantum graphs with point interactions at vertices \cite{Manko2010, Manko2012, ExnerManko2013}.

This paper can be viewed as a natural continuation of our work \cite{FlyudGolovaty2012},
where the vibrations of star-shaped network of strings with  vanishingly small stiffness were treated.
In \cite{FlyudGolovaty2012},  the boundary value problem for  hyperbolic equation containing a small parameter multiplying the second space derivative was studied within the framework of singular perturbation theory, and  asymptotics of solutions were constructed.
The main objective of the present paper is to describe the vibrations of networks  with the essentially different physical properties, e.g., the stiffness coefficients of the strings have a different order of  smallness as a small parameter goes to zero. We study the asymptotic behaviour of solutions to the initial boundary value problem for hyperbolic differential operator on  a star-shaped metric graph with the Dirichlet type boundary conditions.
In contrast to the previous paper, where the case of total degeneration  of the elliptic part was treated, we consider here the partial degeneracy of a hyperbolic operator with different degeneracy factors on subgraphs.
Since the coefficients of the operator depend on a small parameter in the singular way, we show that this leads to a relatively complicated behavior of solutions. We apply the boundary layer methods \cite{VishikLyusternik1957, VasilyevaButuzov1973, Trenogin1970} in order to construct and justify the  asymptotics  of  solutions. Note these results are readily extended to the case of more general  finite graphs.  It is worth to mention that different models of vibrating systems with the singularly perturbed  stiffness  were studied in \cite{Panasenko1987, Sanchez-Palencia1980, GomezLoboNazarovPerez2006}.

The paper is organized as follows. In the next section, we recall the basic notions of metric graphs and PDEs on graphs, and introduce the main object of the paper -- the hyperbolic problem on a star-like graph depending on a small parameter.
In Sections \ref{Sect3} and \ref{Sect4} we construct the formal asymptotic expansion of a solution for the singularly perturbed problem.
Finally, in the last section we justify the asymptotics constructed above.

\section{Statement of problem}\label{Sect2}
A non-oriented finite graph is a pair $\cG=(\cV,\cE)$, where $\cV$ is a finite set of \textit{vertices} and $\cE$ is a finite set of \textit{edges}. We consider a star-shaped planar graph $\cG$ with $n$ edges, i.e., $\cV=\{a,a_1,\ldots,a_n\}$ and $\cE=\{(a,a_1),\ldots,(a,a_n)\}$. The edge $e_j:=(a,a_j)$ can be considered as a piece of line connecting two points $a$ and $a_j$ in $\Real^2$.
We will endow the graph with the metric structure. Any edge $e_j\in \cE$
will be associated with an interval  $[0,\ell_j]$ as follows. Set $\ell_j=\|a_j-a\|$, and suppose that the map
 $\pi_j\colon [0,\ell_j]\to e_j$
is given by
\begin{equation*}
   \pi_j(\tau)=a+\frac{\tau }{\ell_j}\,(a_j-a).
\end{equation*}
The map is a parametrization of $e_j$ by  the arc  length  parameter $\tau$ such that $\pi_j(0)=a$ and $\pi_j(\ell_j)=a_j$.
Hence, there is a canonical distance function $d(x, y)$, $(x, y \in \cG)$ making the graph \textit{a metric space} or \textit{a metric graph.}

We  call  $f:\cG \to\Real$  the \textit{function} on  $\cG$. Here and subsequently,
$f_e$ denotes the restriction of $f$ to the edge $e$ and $f_e(a)$ stands for the limit values
$\lim\limits_{e\ni x\to a}f_e(x)$. The function $f$  is \textit{continuous} on the star-shaped metric graph $\cG$ if it is continuous on each edge $e\in\cE$ and at  the central vertex $a$. The continuity $f$  at the vertex $a$  means that
all limit values $f_e(a)$  along edges $e\in\cE$ equal  the value of $f$  at the vertex.
We also introduce the differentiation of functions along edges.
Set
\begin{equation}\label{EdgeDiff}
  f_{e}'(x)=\frac{d}{d\tau}(f_{e}\circ\pi_e)(\tau),
\end{equation}
where $x=\pi_e(\tau)$.
Let $C(\cG)$ be the space of continuous functions on $\cG$. Let $C^s(e)$ denote the space of functions $f$ such that
$f^{(j)}\colon e \to\Real$ is  continuous on edge $e$ for all $j=0,1,\dots,s$. We also define the spaces
\begin{align*}
  \dot{C}^s(\cG)&=\{f\colon f_e\in C^s(e) \text{ for all } e\in \cE\}, \\
  C^s(\cG)&=\{f\colon f\in C(\cG),\; f_e\in C^s(e) \text{ for all } e\in \cE\}.
\end{align*}
We note that the derivatives of  $f$ at the central vertex  $a$ can not be defined; we must be content with the set of limit values $f_e'(a)$  along the edges. In other words, the first derivative $f'$ of a $C^1(\cG)$-function $f$ is generally discontinuous at $x=a$. Remark that the value $f'(a)$  does not matter in our considerations against the set $\{f'_e(a)\}_{e\in\cE}$.

Let us divide the set $\cE$ of edges into non-empty disjoint subsets $\cE_0,\cE_1,\dots,\cE_k$.  Thereafter
the graph $\cG$ breaks into $k+1$ star subgraphs $\cG_0,\cG_1,\dots,\cG_k$, as is shown in Fig.~\ref{PicGandGi}.
We will denote by $\partial\cG$ the set of vertices $\{a_1,\dots,a_n\}$. Then $\partial\cG=\bigcup_{i=0}^k\partial\cG_i$, where $\partial\cG_i$ is a collection of the vertices  of $\cG_i$ minus $a$. Set also $\cG_*=\bigcup_{i=1}^k \cG_i$. Consequently, $\cV_*=(\partial\cG\setminus\partial\cG_0)\cup\{a\}$ and $\cE_*=\cE\setminus\cE_0$.

\begin{figure}[t]
\centering
  \includegraphics[scale=0.64]{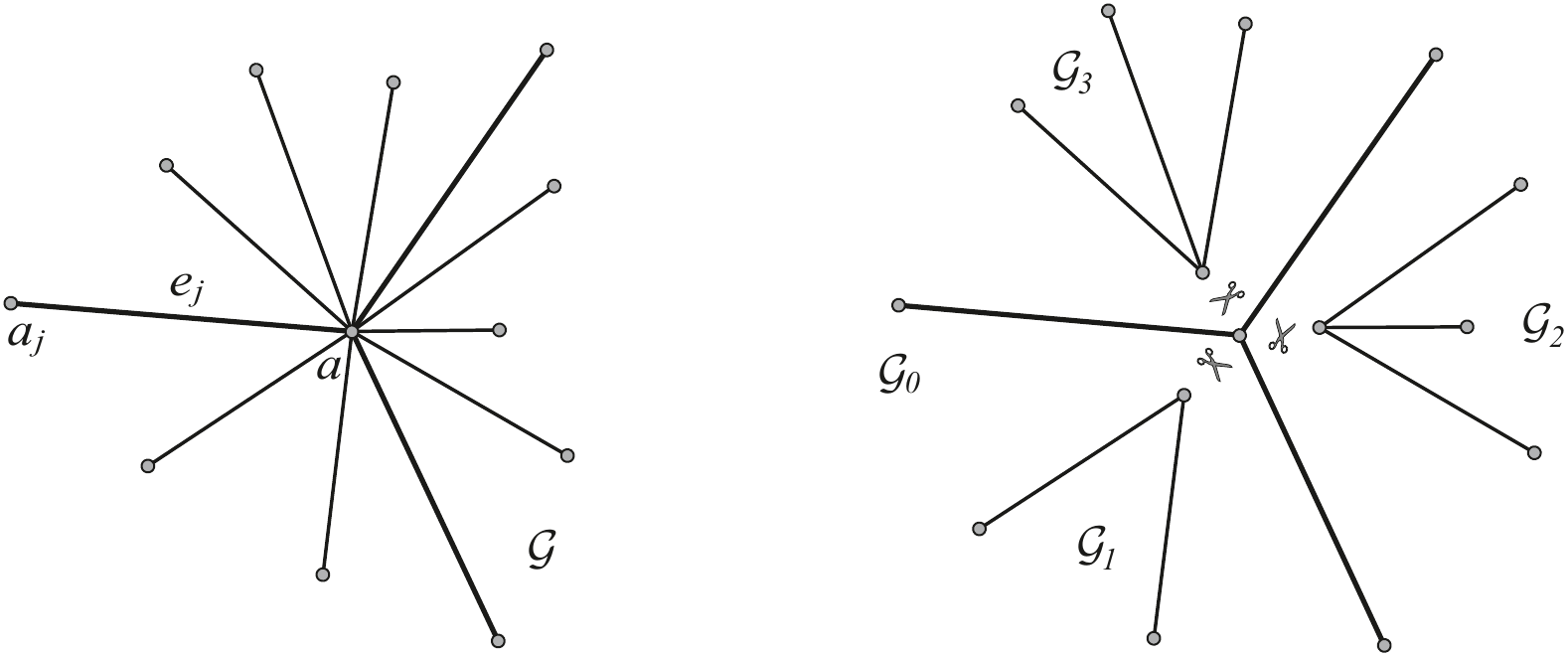}\\
  \caption{The star graph $\cG$ and the subgraphs $\cG_i$.}\label{PicGandGi}
\end{figure}

We consider the function $b^\eps\colon \cG\to\Real$ that is constant on each subgraph $\cG_i$. Set $b^\eps(x)=\eps^{2m_i}$ for $x\in \cG_i$, where $m_0=0$ and $m_1,\dots,m_k$ are positive integers such that $m_1<m_2<\cdots<m_k$, and $\eps$ is a small positive parameter.
The function $b^\eps$ can be considered as the stiffness coefficient of a bundle of strings, which possesses significantly different values on the subsystems $\cG_0,\dots,\cG_k$ as $\eps\to 0$. Remark that the case of rational powers $m_1,\dots,m_k$ can be reduced to the case under consideration by a suitable change of the small parameter $\eps\mapsto \eps^{\alpha}$.
Let $\cI=\Real_t^+$ be the positive time half-line and $\cQ=\cG\times\cI$.

We study the asymptotic behaviour of solution $u^\eps\colon \cQ\to \Real$ of the boundary value problem
\begin{align}
\label{Equation}
    &\partial^{2}_t\ue -\partial_x( b^\eps \partial_x \ue) +q\ue=f\quad \text{ in } \cQ,  \\
\label{InitCond}
    &\ue=\varphi,  \quad \partial_t\ue=\psi \quad \text{ on } \cG\times\{0\},\\
\label{BoundCond}
    &\ue=\mu  \quad \text{ on } \partial\cG\times\cI,\\
\label{Continuity}
    &\ue(\,\cdot\,,t) \text{ is continuous at } x=a \text{ for all } t\in\cI,\\
\label{Kirchhoff}
    & \sum_{e\in\cE} b_{e}^\eps (a)\, \partial_x \ue_e(a,t)=0 \quad \text{  for all } t\in\cI,
\end{align}
where $f\colon \cQ\to\Real$, $\mu\colon \partial\cG\times\cI\to \Real$ and $q, \phi, \psi\colon \cG\to\Real$
are given functions. By $\partial_x^j $ we mean the operator of differentiation along an edge as in \eqref{EdgeDiff}. Since $b^\eps$ is constant on each subgraph $\cE_i$, \eqref{Equation} is actually the set of differential equations
\begin{equation*}
  \partial^{2}_t\ue_e -\eps^{2m_i}\partial_x^2 \ue_e +q_e(x)\ue_e=f_e(x,t)\quad \text{ in } e\times\cI,
\end{equation*}
where $e\in\cE_i$ and $i\in\{0,\dots,k\}$. Condition \eqref{Kirchhoff}  is usually called the~\textit{Kirchhoff vertex condition.}

In order to obtain both a smooth enough solution $\ue$ of \eqref{Equation}--\eqref{Kirchhoff}  for each positive $\eps$ and the complete asymptotic expansion of $\ue$, it is necessary to put some restrictions on the input data. We suppose that
\begin{equation}\label{AssumptionFQ}
q \in \dot{C}^\infty(\cG), \quad f\in \dot{C}^\infty(\cQ), \quad \phi, \,\psi\in C^\infty(\cG), \quad \mu\in C^2(\partial\cG\times\cI).
\end{equation}
Moreover, the following compatibility conditions
\begin{equation}\label{Fitting01}
\begin{aligned}
  &\phi(a_j)=\mu(a_j,0), \quad \psi(a_j)=\partial_t\mu(a_j,0) \quad\text{ for } a_j\in\partial\cG,\\
  &\sum_{e\in\cE_i} \phi_e'(a)=0  \quad\text{ for } i\in\{0,\dots,k\}
\end{aligned}
\end{equation}
hold.
\begin{rem}
It may be mentioned here that we can make assumptions upon the input data to ensure
the existence of a $C^2(\cQ)$ solution $\ue$ for all $\eps\in(0,1)$. Namely, \eqref{Fitting01}
must be  supplemented with the following conditions
\begin{equation}\label{Fitting2}
\begin{aligned}
   &\partial^2_t \mu(a_j,0)-\phi''(a_j)+q(a_j)\phi(a_j)=f(a_j,0) && \text{for } a_j\in\partial\cG_0,\\
  &\partial^2_t \mu(a_j,0)+q(a_j)\phi(a_j)=f(a_j,0),\quad \phi''(a_j)=0  && \text{for } a_j\in\partial\cG\setminus\partial\cG_0,  \\
  &\sum_{e\in\cE_i} \psi_e'(a)=0 \quad \text{ for } i\in\{0,\dots,k\}, \qquad \phi_{e}''(a)=0  && \text{for } e\in\cE,
\end{aligned}
\end{equation}
and the function $q(x)\phi(x)-f(x,0)$ must be continuous at $x=a$.
But these conditions are relatively restrictive. Therefore we assume below that only $C^1$ compatibility conditions \eqref{Fitting01} hold.
\end{rem}

It is well-known \cite[Ch.7.2]{EvansPDE} that the singularities of solutions of hyperbolic equations propagate along cha\-rac\-teristics. Since the initial data of \eqref{Equation}--\eqref{Kirchhoff} are smooth enough, the single reason for discontinuity of  $\ue$ and its derivatives is a mismatch
of the initial conditions and boundary value ones at vertices of the graph.
Suppose that the $C^1$ compatibility conditions \eqref{Fitting01} are satisfied. If conditions \eqref{Fitting2} do not hold, but there exist finite values of all quantities $\partial^2_t \mu(a_j,0)$, $\phi''(a_j)$, $f(a_j,0)$, \dots in these equalities, then the second derivatives of solution can possess only the jump discontinuities along characteristics starting at the vertices. Therefore the second derivatives exist almost everywhere in $\cQ$ and remain bounded on each compact subset of $\cQ$.
We introduce the space
\begin{equation*}
\cU(\Omega)=\left\{u\in C^1(\Omega)\colon \partial^2_t u,\; \partial_{x}\partial_{t}u,\; \partial^2_x u\in L^\infty_{loc}(\Omega)\right\}.
\end{equation*}
From now on, by a solution of the hyperbolic equation we mean a \textit{weak solution} that belongs to $\cU(\cQ)$.

Conditions \eqref{Fitting01} together with continuity of  the initial data $\phi$ and $\psi$ at $x=a$ ensure existence of a unique weak solution $\ue$ of {\eqref{Equation}--\eqref{Kirchhoff}} \cite{MehmetiMeister1989,Mehmeti1994,KostrykinPotthoffSchrader2012}. Remark that all well-known results \cite{KrzyzanskiSchauder1936,SaramotoI1970,SaramotoII1970,LadyzhenskayaBVPs,VolevichGindikin1996} on the unique solvability for hyperbolic initial boundary value problems are still true for the problems on metric graphs.

\section{Formal asymptotic expansion of solution: leading terms}\label{Sect3}
\subsection{Limit problem}
We look for an approximation to the solution $\ue$ of  \eqref{Equation}--\eqref{Kirchhoff} for $\eps$ small enough, and  begin by setting
\begin{equation*}
  \ue(x,t)= u(x,t)+o(1) \quad \text{ as } \eps\to 0.
\end{equation*}
Recall that the function $b^\eps$ vanishes on the subgraph $\cG_*$ as $\eps$ goes to zero.
Upon substituting $\eps=0$ into \eqref{Equation} we see that $u$ must satisfy the following equations
\begin{equation*}
    \partial^2_t u-\partial^2_x u+qu= f \quad \text{ in }  \cQ_0, \qquad
    \partial^2_t u+qu= f \quad\text{ in } \cQ_*,
\end{equation*}
where  $\cQ_0= \cG_0\times\cI$ and $\cQ_*=\cG_*\times \cI$.
In addition, our sending $\eps\to 0$ in \eqref{Kirchhoff} yields the condition
\begin{equation*}
   \sum_{e\in\cE_0} \partial_x u_{e}=0 \quad \text{ on } \{a\}\times\cI.
\end{equation*}
Thus on the subgraph $\cG_*$  hyperbolic equation \eqref{Equation} degenerates  into the ordinary differential equation  with respect to  $t$, depending on parameter $x$. Therefore  $u$  can not satisfy all of the  boundary conditions \eqref{BoundCond}--\eqref{Kirchhoff}.
It is reasonable to consider the problem
\begin{align}\label{ProblemU0}
   & \partial^2_t u-\partial^2_x u+qu= f \qquad \text{in }  \cQ_0,\\
   &\partial^2_t u+qu= f \qquad \text{in } \cQ_*,\\
   & u=\varphi, \quad \partial_tu=\psi \quad \text{on } \cG\times\{0\},\qquad
    u=\mu\quad \text{ on } \partial\cG_0\times\cI,\\
   & \text{the restriction of } u(\,\cdot\,,t) \text{ to } \cG_0
   \text{ is continuous at } a \text{  for all } t\in\cI,\\\label{ProblemU0last}
   &  \sum\limits_{e\in\cE_0} \partial_x u_{e}=0 \quad \text{on } \{a\}\times\cI,
\end{align}
which is actually an uncoupled system of the initial boundary value problem for the hyperbolic equation on the graph $\cG_0$ and ordinary differential equations on edges of $\cG_*$.

\begin{lem}\label{LemmaU}
Under assumption  \eqref{AssumptionFQ}, \eqref{Fitting01} there exists a unique weak solution of \eqref{ProblemU0}--\eqref{ProblemU0last}.
\end{lem}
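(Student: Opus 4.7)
The key observation is that the limit system \eqref{ProblemU0}--\eqref{ProblemU0last} decouples into two independent problems: a second-order hyperbolic initial-boundary value problem on the star subgraph $\cG_0$, and a family of parameter-dependent linear ODEs in $t$ on the edges of $\cG_*$. My plan is to solve each subproblem separately and then paste the two solutions together, using the global continuity of $\phi$ and $\psi$ at the central vertex $a$ to ensure consistency of the initial data.

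For the degenerate part, fix any edge $e\in\cE_*$ and any point $x\in e$ and treat $x$ as a parameter. One obtains the scalar Cauchy problem $\partial_t^2 u_e(x,\cdot)+q_e(x)\,u_e(x,\cdot)=f_e(x,\cdot)$ with data $u_e(x,0)=\phi_e(x)$, $\partial_t u_e(x,0)=\psi_e(x)$. Variation of parameters produces a unique explicit solution; since $q$, $f$, $\phi$, $\psi$ are smooth in $x$ along each edge by \eqref{AssumptionFQ}, the solution is smooth in $(x,t)$ on $e\times\cI$ and lies in $\cU(e\times\cI)$. No vertex conditions are imposed here, because the limit equation on $\cG_*$ is purely local in the spatial variable.

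For the non-degenerate part I reduce to a standard hyperbolic IBVP on the star graph $\cG_0$ with Dirichlet data on $\partial\cG_0$, continuity at $a$, and the Kirchhoff condition $\sum_{e\in\cE_0}\partial_x u_e(a,t)=0$. Hypotheses \eqref{AssumptionFQ} and the compatibility relations \eqref{Fitting01} with $i=0$ furnish exactly the regularity and $C^1$-compatibility of data required by the classical existence and uniqueness theory for second-order hyperbolic problems on metric graphs; invoking \cite{MehmetiMeister1989,Mehmeti1994,KostrykinPotthoffSchrader2012} yields a unique weak solution $u|_{\cQ_0}\in\cU(\cQ_0)$. Uniqueness can alternatively be recovered directly by the standard energy identity on $\cG_0$, in which the boundary term at $a$ vanishes by virtue of the continuity and Kirchhoff conditions.

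Patching the two pieces defines a function on $\cQ$. Its consistency at the junction $a$ at time $t=0$ comes from $\phi,\psi\in C(\cG)$, and for $t>0$ no matching is imposed between $\cG_0$ and $\cG_*$ because the degeneration decouples the edges of $\cE_*$ from the Kirchhoff balance. Uniqueness of the glued function follows from the uniqueness of each sub-solution. I expect the main (and essentially only) obstacle to be bookkeeping: carefully checking that \eqref{Fitting01} restricted to $\cE_0$ supplies precisely the hypotheses demanded by the cited hyperbolic theory, and verifying the regularity membership in $\cU(\cQ)$ edge by edge. The analytic content is entirely classical.
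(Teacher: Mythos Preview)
Your proposal is correct and follows essentially the same approach as the paper: decouple the limit problem into the hyperbolic IBVP on $\cG_0$ (solved via the cited graph-hyperbolic theory after checking that \eqref{Fitting01} with $i=0$ gives the needed $C^1$ compatibility) and the family of parameter-dependent linear ODEs on the edges of $\cG_*$ (solved explicitly by variation of parameters), then paste. The only cosmetic difference is the order of treatment and that the paper records the explicit ODE solution formulas.
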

\begin{proof}
  First, the restriction $u$ to $\cQ_0$ solves the hyperbolic boundary value problem on $\cG_0$:
\begin{align}\label{ProblenU0G1}
  &\partial^2_t u-\partial^2_x u+qu= f \qquad \text{in }  \cQ_0,\\\label{ProblenU0G1init}
  &u=\varphi,  \quad \partial_tu=\psi \; \text{ on } \cG_0\times\{0\},\quad
  u=\mu \; \text{ on } \partial\cG_0\times\cI,\\
  &u(\,\cdot\,,t) \text{  is continuous at } x=a \text{  for all } t\in\cI,\\\label{ProblenU0G1last}
  & \sum\limits_{e\in\cE_0} \partial_x u_{e}=0 \quad \text{on } \{a\}\times\cI.
\end{align}
The problem admits a unique weak solution, because the input data are smooth enough and the $C^1$ compatibility conditions \cite{KostrykinPotthoffSchrader2012}
\begin{equation}\label{FittingU0G1}
\phi(a_j)=\mu(a_j,0), \quad \psi(a_j)=\partial_t\mu(a_j,0) \text{ \ for } a_j\in\partial\cG_0,\quad
\sum_{e\in\cE_0} \phi_e'(a)=0
\end{equation}
hold, which follows from \eqref{AssumptionFQ} and \eqref{Fitting01}.

Next, we can find $u$ on the rest of edges by solving the Cauchy problems for ordinary differential equations with respect to time
\begin{equation}\label{ProblemU0G2}
\partial^2_t u+qu= f \quad \text{ in } e\times\cI,\qquad
 u=\varphi,  \quad \partial_tu=\psi \quad \text{ on } e\times\{0\}
\end{equation}
for each $e\in \cE_*$ separately. All these problems can be solved explicitly:
\begin{align}\label{U0QnonZero}
 &\begin{aligned}
  u(x,t)=\phi(x&)\cos\sqrt{q(x)}\,t+\frac{\psi(x)}{\sqrt{q(x)}}\sin\sqrt{q(x)}\,t\\
  &+ \frac{1}{\sqrt{q(x)}} \int_0^tf(x,\tau)\sin\sqrt{q(x)}(t-\tau)\,d\tau, \quad \text{ if }  q(x)\neq 0,
 \end{aligned}\\\label{U0QZero}
 &u(x,t)=\phi(x)+t\psi(x)+\int_0^t(t-\tau)f(x,\tau)\,d\tau,\quad \text{ if }  q(x)=0
\end{align}
for all $x\in \cG_*$ and $t\in\cI$.
Here we choose the single-valued branch of the root $w=\sqrt{z}$ such that $w(1)=1$.
It is easy to check that the solution $u$ given by \eqref{U0QnonZero} and \eqref{U0QZero} is a real-valued $C^\infty$-function on each edge
$e\in \cE_*$, since the functions $q$, $f$, $\phi$ and $\psi$ are smooth  by \eqref{AssumptionFQ}. Therefore the restriction $u$ to the subgraph $\cQ_*$ belongs to $\dot{C}^\infty(\cQ_*)$. Returning now to the whole graph $\cG$, we see that $u$ is a solution of  \eqref{ProblemU0}--\eqref{ProblemU0last}. \end{proof}

From now on, $U_0$ stands for the solution of \eqref{ProblenU0G1}--\eqref{ProblenU0G1last} and $u_0$ for the solution of \eqref{ProblemU0G2}.
Thus
\begin{equation}\label{RegularAsymptoticU0}
  u(x,t)=
  \begin{cases}
    U_0(x,t)& \text{if } (x,t)\in \cQ_0,\\
    u_0(x,t)& \text{if } (x,t)\in \cQ_*.
  \end{cases}
\end{equation}
Because we can not control in time the value of $u_0$ both at the central vertex $a$ and on the boundary of graph $\cG_*$, the approximation \eqref{RegularAsymptoticU0} generally ignores continuity condition \eqref{Continuity}  as well as boundary conditions \eqref{BoundCond} on $\partial\cG_*$.  This approximation is therefore not suitable for the whole  graph $\cG$. We shall refine \eqref{RegularAsymptoticU0} by applying  boundary layer approximations.

\begin{figure}[t]
  \includegraphics[scale=.64]{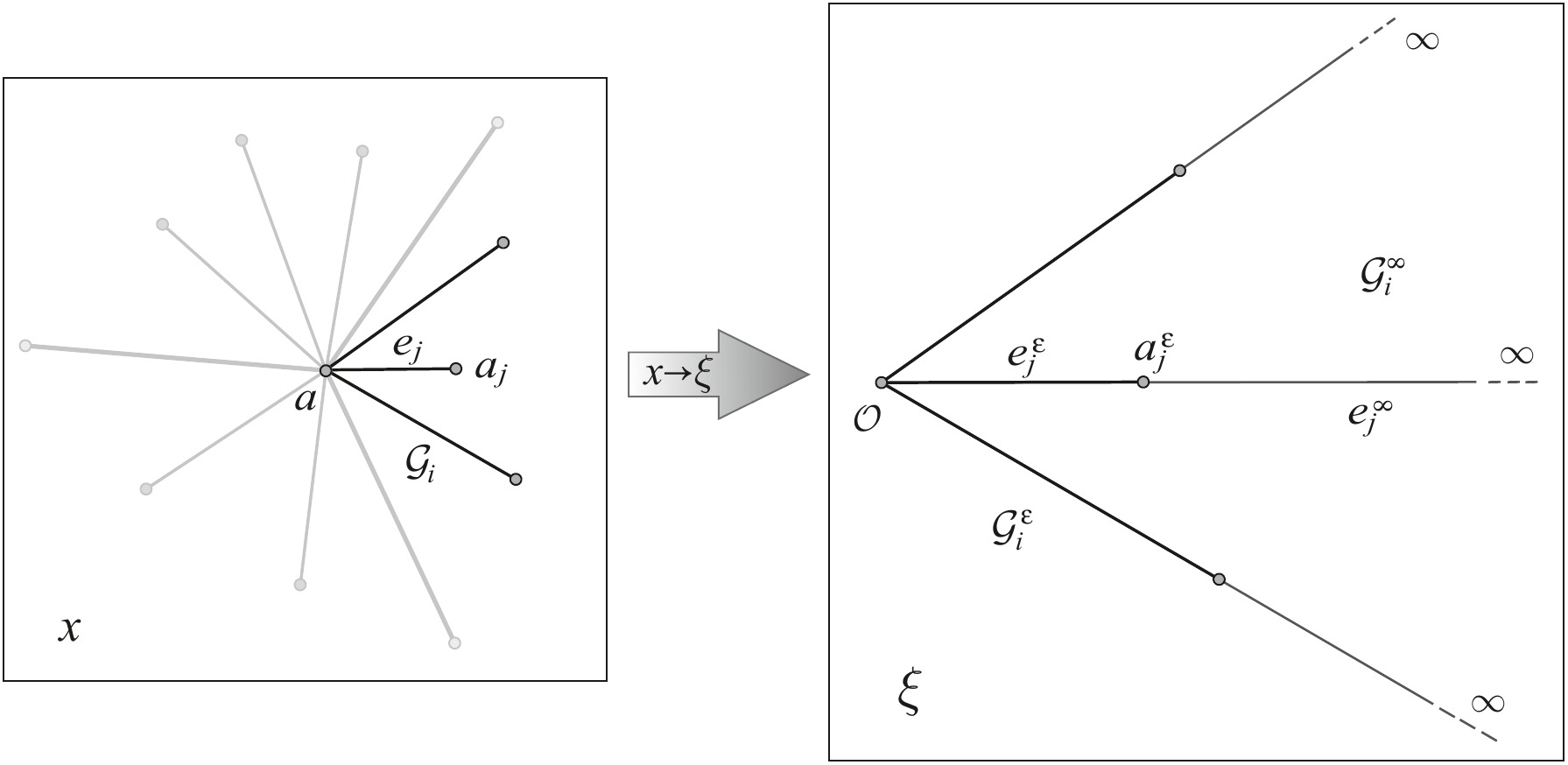}\\
  \caption{The graphs $\cG_i^\eps$ and $\cG_i^\infty$.}\label{PicGandGinfty}
\end{figure}

\subsection{Boundary layers in a vicinity of the central vertex}
First we will modify the approximation \eqref{RegularAsymptoticU0} in the area of degeneration of the hyperbolic equation in order to satisfy the  continuity condition \eqref{Continuity}.

Given a subgraph $\cG_i$, we introduce the new variables $\xi=\eps^{-m_i}(x-a)$. In the auxiliary space $\Real^2$ with the variables $\xi$  we consider
the star graph $\cG_i^\eps$, which is the image of $\cG_i$ under the expanding mapping $x\mapsto \xi$.
For all positive $\eps$ this image lies on the noncompact star graph $\cG_i^\infty=(\{O\}, \cE^\infty_i)$, as shown in Fig.~\ref{PicGandGinfty}.
Therefore the star-like graph $\cG_i^\infty$ has the same number of edges as $\cG_i$, but its edges are  noncompact, i.e., the edges are rays with the origin at point $O$ of the auxiliary plane $\Real^2$. Let $\cE^\infty_i$ be  the set of edge-rays $e_j^\infty$ of $\cG_i^\infty$.
All such  edges $e_j^\infty$ possess the natural parametrisation
\begin{equation*}
\pi_j^\infty\colon [0,+\infty)\to e^\infty_j,\qquad  \pi_j^\infty(\tau)=\frac{a_j-a}{\|a_j-a\|}\,\tau.
\end{equation*}
Since $\partial_{\xi}=\eps^{m_i}\partial_{x}$, in terms of the new variables the homogeneous PDE \eqref{Equation} on $\cG_i$ can be written as
\begin{equation}\label{PDEinFastVariables}
  \partial^2_t\ve  -\partial^2_{\xi}\ve+ q(a +\eps^{m_i}\xi)\ve=0\quad \text{ in } \cG_i^\eps\times\cI
\end{equation}
for all $i=1,\dots,k$.

The coefficient $b^\eps$ in \eqref{Equation} is infinitely small on  $\cG_*$ as $\eps\to 0$, therefore we can use $b^\eps$ as a ``small parameter'' in this subgraph.
Let us introduce the  fast variables $y_\eps=(x-a)/\sqrt{b^\eps(x)}$ on the whole graph $\cG_*$, which coincide with $\xi=\eps^{-m_i}(x-a)$ on each subgraph $\cG_i$,  and modify our approximation
\begin{equation}\label{ApproxU0V0}
  \ue(x,t)\approx
  \begin{cases}\displaystyle
    U_0(x,t)& \text{if } (x,t)\in \cQ_0,\\
    u_0(x,t)+v_0(y_\eps,t)& \text{if } (x,t)\in \cQ_*.
  \end{cases}
\end{equation}

We define the function $v_0$ as follows.
Fix a number $i\in\{1,2,\dots,k\}$ and an edge $e\in \cE_i$, and let  $e^\infty$ be the corresponding edge-ray of $\cE^\infty_i$ in $\Real^2$ with the coordinates $\xi=\eps^{-m_i}(x-a)$ (see Fig.~\ref{PicGandGinfty}).
In view of  \eqref{PDEinFastVariables} and  the formal Taylor expansion
\begin{equation}\label{Qexpansion}
  q_{e}(a +\eps^{m_i}\xi)=q_{e}(a)+\eps^{m_i}q'_{e}(a)\xi+\textstyle\frac{1}{2}\eps^{2m_i}q''_{e}(a)\xi^2 +\cdots,
\end{equation}
we assume that the restriction of $v_0$ to the edge $e^\infty$ solves the initial boundary value problem
\begin{equation}\label{ProblemV0}
\begin{aligned}
&\partial^2_t v -\partial^2_{\xi}v+ q_{e}(a)v=0 &&\text{in } e^\infty\times \cI,\\
&v=0, \;\; \partial_t v=0  && \text{on } e^\infty\times \{0\}, \\
&v=U_0(a,\cdot)-u_{0e}(a,\cdot)  && \text{on } \{O\}\times \cI.
\end{aligned}
\end{equation}
The value $U_0(a,t)$ is  uniquely defined, since $U_0$ is continuous at the vertex $a$.
Hence, the approximation \eqref{ApproxU0V0} satisfies \eqref{Continuity}, since
$u_{0e}(a,t)+v_{0e}(O,t)=U_0(a,t)$ for all $e\in\cE_*$ and $t\in\cI$ .
Note that the initial data of \eqref{ProblemV0} satisfy the compatibility conditions:
\begin{align*}
  &\lim_{t\to 0}v(O,t)=U_0(a,0)-u_{0e}(a,0)=\phi(a)-\phi(a)=0,\\
   &\lim_{t\to 0}\partial_t v(O,t)=\partial_tU_0(a,0)-\partial_tu_{0e}(a,0)=\psi(a)-\psi(a)=0,
\end{align*}
 which  follows from \eqref{ProblenU0G1init}, \eqref{ProblemU0G2} and continuity of $\phi$ and $\psi$.

\begin{lem}\label{LemmaV0isBoundaryLayer}
Let $v_0$ be a solution of  \eqref{ProblemV0}.
 Given $T>0$, the composition
 $$
    V_\eps(x,t)=v_0\left(\frac{x-a}{\sqrt{b^\eps(x)}},t\right)
$$
represents a boundary layer function near the central vertex as $\eps\to 0$, i.e., $V_\eps$ is different from zero
in the $\sqrt{b^\eps}\, t$-neighbourhood of  the vertex $a$ only.
\end{lem}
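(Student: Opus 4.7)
The plan is to reduce the claim to the finite propagation speed for the Klein--Gordon-type initial boundary value problem \eqref{ProblemV0}. Concretely, I will show that, on each ray $e^\infty$, the solution of \eqref{ProblemV0} satisfies $v_0(\xi, t) = 0$ whenever $\xi > t$. Translating back via $\xi = (x - a)/\sqrt{b^\eps(x)}$, which on $\cG_i$ equals $(x - a)/\eps^{m_i}$, this is exactly $V_\eps(x, t) = 0$ whenever $|x - a| > \sqrt{b^\eps(x)}\, t$, i.e., the advertised localisation near $a$ of thickness $\sqrt{b^\eps}\, t$. Because \eqref{ProblemV0} decouples across the edges of $\cG_i^\infty$, it suffices to argue on a single ray $e^\infty \cong [0, +\infty)$.

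For the finite-speed step I use an energy estimate on a backward characteristic triangle. Fix $(\xi_0, t_0)$ with $0 < t_0 < \xi_0$ and set $\alpha(t) = \xi_0 - (t_0 - t)$, $\beta(t) = \xi_0 + (t_0 - t)$, so the cone $K = \{(\xi, t) : 0 \le t \le t_0,\ \alpha(t) \le \xi \le \beta(t)\}$ lies in $(0, +\infty) \times [0, t_0]$ (since $\alpha(t) \ge \xi_0 - t_0 > 0$). Consider
\begin{equation*}
    E(t) = \frac{1}{2}\int_{\alpha(t)}^{\beta(t)} \bigl( |\partial_t v_0|^2 + |\partial_\xi v_0|^2 + v_0^2 \bigr)\, d\xi.
\end{equation*}
Differentiating in $t$, substituting $\partial_t^2 v_0 = \partial_\xi^2 v_0 - q_e(a)\, v_0$, and integrating by parts in $\xi$, the moving-endpoint contributions combine with the flux $[\partial_t v_0\, \partial_\xi v_0]_{\alpha}^{\beta}$ into
\begin{equation*}
    -\tfrac{1}{2}(\partial_t v_0 - \partial_\xi v_0)^2\big|_{\xi = \beta(t)} - \tfrac{1}{2}(\partial_t v_0 + \partial_\xi v_0)^2\big|_{\xi = \alpha(t)} - \tfrac{1}{2}\bigl(v_0^2|_{\alpha(t)} + v_0^2|_{\beta(t)}\bigr),
\end{equation*}
which is non-positive, while the leftover interior term $(1 - q_e(a))\int_{\alpha}^{\beta} v_0\,\partial_t v_0\, d\xi$ is bounded by $|1 - q_e(a)|\, E(t)$. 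Hence $E'(t) \le |1 - q_e(a)|\, E(t)$, and since the zero Cauchy data in \eqref{ProblemV0} give $E(0) = 0$, Gronwall forces $E \equiv 0$ on $[0, t_0]$; in particular $v_0(\xi_0, t_0) = 0$.

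The main technical wrinkle is that $q_e(a)$ is not assumed non-negative, so the \emph{natural} hyperbolic energy $\tfrac12(|\partial_t v_0|^2 + |\partial_\xi v_0|^2 + q_e(a)\, v_0^2)$ is not sign-definite and need not decay along the cone. Padding the energy with $v_0^2$ restores positivity and the resulting $(1 - q_e(a))$ discrepancy is absorbed by Gronwall. The finite-speed phenomenon itself is encoded in the non-positivity of the boundary flux along the characteristic sides of $K$; this rests on the coefficient in front of $\partial_\xi^2$ in \eqref{ProblemV0} being $1$, which is also precisely why $\sqrt{b^\eps}$ is the natural layer thickness after returning to the $x$-variable.
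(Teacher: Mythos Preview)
Your argument is correct and establishes the same finite-speed-of-propagation fact---$v_0(\xi,t)=0$ whenever $\xi>t$---that drives the paper's proof, but by a genuinely different method. The paper exploits that \eqref{ProblemV0} has constant coefficients and quotes the explicit Riemann-function representation of the solution in a quarter-plane (the Bessel-kernel formula \eqref{IntegralRepresentForSoln}); below the characteristic $\xi=t$ the solution depends only on the Cauchy data on $[\xi-t,\xi+t]$, which here vanish. You instead run a standard energy estimate over a backward characteristic triangle, padding with $v_0^2$ so that the possibly negative $q_e(a)$ is absorbed via Gronwall. Your route is more self-contained and would extend verbatim to variable lower-order coefficients; the paper's route has the side benefit that the explicit representation \eqref{IntegralRepresentForSoln} is reused later (in the proof of the lemma on the higher-order terms) to show that the boundary layers $v_s$ inherit the same support property.
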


\begin{proof}
 Problem \eqref{ProblemV0}  is a standard  initial boundary value problem for hyperbolic equation with constant coefficients in a quarter-plane \cite[II.2]{TikhonovSamarskiiHandbook} of the form
\begin{equation}\label{ProblemInQuarter}
    \begin{aligned}
      &\partial^2_t v -\partial^2_{s}v-\vartheta v=0 &&\text{in } \{(s,t)\in \Real^2\colon s>0,\,t>0\},\\
      &v(s,0)=\alpha(s),\quad \partial_t v(s,0)=\beta(s)  && \text{for } s>0, \\
      &v(0,t)=\nu(t)  && \text{for } t>0.
    \end{aligned}
\end{equation}
It admits a unique solution, provided the $C^1$ compatibility conditions
\begin{equation}\label{CompatibilityCondForV}
\nu(0)=\alpha(0),\qquad \nu'(0)=\beta(0)
\end{equation}
hold.
Under the characteristic $s-t=0$ passing through the origin, where the boundary value condition $\nu$ have no effect on the solution, we have
\begin{equation}\label{IntegralRepresentForSoln}
  v(s,t)=\frac{1}{2}\left(\alpha(s+t)+\alpha(s-t)\right)
  +\frac{1}{2}\int\limits_{s-t}^{s+t}\big(k(t,s,y)\beta(y)+\partial_t k(t,s,y)\alpha(y)\big)\,dy,
\end{equation}
where $k(t,s,y)=J_0\left(\sqrt{\vartheta\big((s-y)^2-t^2\big)}\right)$ and
$J_0$ is the Bessel function (see \cite[II.5]{TikhonovSamarskiiHandbook}). The last formula is valid for all real $\vartheta$, in particular, for $\vartheta=0$  it turns into d'Alembert's formula.

Hence, for $s-t>0$ the solution $v(s,t)$  depends on the initial data $\alpha$ and $\beta$ in the interval $[s- t, s+t]\subset\Real_+$ only.
From this we deduce that a solution $v_0$ of \eqref{ProblemV0} is equal to zero for $s>t$, because it satisfies the homogeneous initial conditions. In other words, $v_0$ can be different from zero in the $t$-neighbourhood of the origin $\mathcal{O}$ only. Then $V_\eps$ is  nonzero on the set $\{(x,t)\colon t>0,\, x-a<\sqrt{b^\eps(x)}\,t\}$ only, i.e., in the $\sqrt{b^\eps}\, t$-neighbourhood of  the vertex $a$ only.
This region is small as $\eps\to 0$ uniformly on $t\in (0,T)$.
\end{proof}

\subsection{Improvement of the asymptotics near the boundary $\partial\cG_*$}

We now modify  asymptotics \eqref{ApproxU0V0} near the boundary vertices $a_j\in\partial\cG_*$,
in the same way as we did it near the central vertex $a$.
Let $e$  be the edge of subgraph $\cG_i$ connecting  the vertices $a$ and $a_j$.
We introduce the new variables
\begin{equation*}
  z=\eps^{-m_i}(x-a_j) \text{ \ for } x\in e.
\end{equation*}
Suppose that the function $w_{0,e}$ solves the initial boundary value problem in the quarter-plane
\begin{equation}\label{ProblemW0}
\begin{aligned}
&\partial^2_t w -\partial^2_{z}w+ q_{e}(a_j)w=0 &&\text{in } \{(z,t)\colon z<0, \,\,t>0\},\\
&w(z,0)=0,\quad \partial_t w(z,0)=0 &&\text{for } z<0, \\
&w(0,t)=\mu_e(t) -u_{0e}(a_j,t) &&\text{for } t>0.
\end{aligned}
\end{equation}
In view of \eqref{Fitting01} and \eqref{ProblemU0G2}, the $C^1$ compatibility conditions for the  problem hold. As in the earlier proof of Lemma~\ref{LemmaV0isBoundaryLayer}, one likewise deduces that $w_{0,e}$ is nonzero above the characteristic $z+t=0$ only.

\begin{figure}[t]
  \centering
  \includegraphics[scale=0.64]{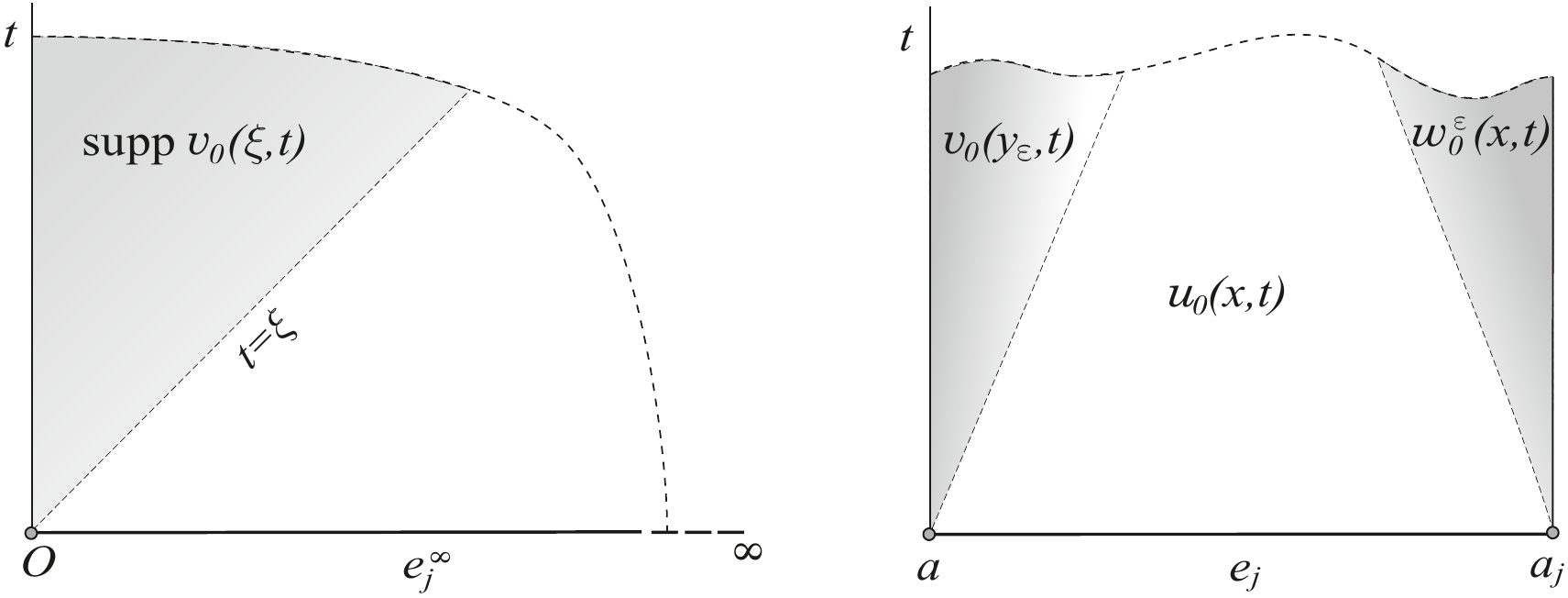}\\
  \caption{The boundary layers near the vertices $a$ and $a_j$ on the edge  $e_j$ of  subgraph $\cG_*$. The function $v_0$ is a solution of \eqref{ProblemV0} and $w_0^\eps$ is defined by \eqref{ProblemW0},\eqref{Weps0}.}\label{FigBoundaryLayers}
\end{figure}

We define $w_{0}^\eps\colon \cQ_*\to\Real$ as follows.
For any $e_j=(a,a_j)$ such that $e_j\in \cE_i$, $i=1,\dots,k$, the restriction of  $w_{0}^\eps$ to the set $e_j\times\cI$ coincides with the function
\begin{equation}\label{Weps0}
  w_{0,e_j}\left(\eps^{-m_i}(x-a_j),t\right).
\end{equation}
Hence, $w_{0}^\eps$ is a boundary layer function, which describes the singular behaviour of $\ue$ in a neighbourhood of boundary $\partial\cG_*$.
Note the boundary conditions at $z=0$ have been chosen in the  manner that $u_{0}+w_{0}^\eps$ satisfies \eqref{BoundCond}.

We will from now on assume that the time variable $t$ belongs to a finite interval $(0,T)$, and introduce the notation  $\cI_T=(0,T)$,
$\cQ_T=\cG\times\cI_T$,  $\cQ_0^T=\cG_0\times\cI_T$, and $\cQ_*^T=\cG_*\times\cI_T$.
Then if $\eps$ is small enough and $t\in (0,T)$, the functions $v_0(y_\eps,\,\cdot\,)$ and $w_0^\eps$ are actually boundary layers localized
near the vertex of $\cG_*$ (see the second plot in Fig.~\ref{FigBoundaryLayers}).
Hence, we set
\begin{equation}\label{LeadingTerms}
  \ue(x,t)\approx
  \begin{cases}\displaystyle
    U_0(x,t)& \text{if } (x,t)\in \cQ_0^T,\\
    u_0(x,t)+v_0(y_\eps,t)+w_0^\eps(x,t)& \text{if } (x,t)\in \cQ_*^T.
  \end{cases}
\end{equation}
We will refer to  \eqref{LeadingTerms} as the leading behavior or leading terms of the asymptotics.
At this point, this approximation produces the largest error of order $\eps^{m_1}$ in the Kirchhoff condition \eqref{Kirchhoff}.
We end the section by construction of the first correctors on $\cG_0$,  although we already solved the task declared in the title of  this section.
The problems for these correctors are slightly different from the problem for $U_0$.

\subsection{Correction of approximation error in the Kirchhoff condition}
To improve the accuracy of the approximation in the Kirchhoff condition  \eqref{Kirchhoff},
we finally add new terms to the approximation on $\cG_0$:
\begin{equation}\label{ApproxU0U1}
  \ue(x,t)\sim
  \begin{cases}\displaystyle
    U_0(x,t)+\sum_{i=1}^k \eps^{m_i}U_1^{(i)}(x,t)& \text{if } (x,t)\in \cQ_0^T,\\
   u_0(x,t)+v_0(y_\eps,t)+w_0^\eps(x,t)& \text{if } (x,t)\in \cQ_*^T.
  \end{cases}
\end{equation}
Our substituting \eqref{ApproxU0U1} into \eqref{Kirchhoff} yields
\begin{multline}\label{U1inKirchhoff}
   \sum\limits_{e\in\cE_0}\partial_x U_{0e}(a,t)+
    \sum_{i=1}^k \eps^{m_i}\sum\limits_{e\in\cE_0}\partial_x U_{1e}^{(i)}(a,t)\\
   +\sum_{i=1}^k \eps^{m_i}\sum\limits_{\gamma\in\cE_i^\infty} \partial_{\xi} v_{0\gamma}(O,\cdot)
   +\sum_{i=1}^k \eps^{2m_i}\sum\limits_{\gamma\in\cE_i} \partial_{x}u_{0e}(a,t)\sim 0.
\end{multline}
The function $w_0^\eps$ is absent in the last formula, because it vanishes in a neighbourhood of the vertex $a$.
The first sum in \eqref{U1inKirchhoff} is zero due to \eqref{ProblenU0G1last}. The next two double sums have to eliminate each other.   Therefore the function $U_1^{(i)}$ must be a solution to the problem
\begin{equation}\label{ProblemU1G0}
\begin{aligned}
  &\partial^2_t U-\partial^2_x U+qU= 0 \qquad \text{ in }  \cQ_0,\\
  &U=0,  \quad \partial_tU=0 \;\text{ on } \cG_0\times\{0\},\quad
  U=0  \; \text{ on } \partial\cG_0\times\cI,\\
  &U(\,\cdot\,,t) \text{  is continuous at } a \text{  for all } t\in\cI,\\
  &\sum\limits_{e\in\cE_0} \partial_x U_{e}(a,\cdot)=
  -\sum\limits_{\gamma\in\cE_i^\infty} \partial_{\xi} v_{0\gamma}(O,\cdot)
   \quad \text{ on } \cI,
\end{aligned}
\end{equation}
where $i\in\{1,\dots,k\}$. The problem is a partial case of the hyperbolic boundary value problem on $\cG_0$ with the nonhomogeneous Kirchhoff condition:
\begin{equation}\label{ProblemWithNHKirchhoff}
\begin{aligned}
  &\partial^2_t u-\partial^2_x u+qu= 0 \; \text{ in }  \cQ_0,\\
  &u=\phi,  \; \partial_tu=\psi \;\text{ on } \cG_0\times\{0\},\quad
  u=\mu  \; \text{ on } \partial\cG_0\times\cI,\\
  &u(\,\cdot\,,t) \text{  is continuous at } a \text{  for } t\in\cI,\quad
  \sum\limits_{e\in\cE_0} \partial_x u_{e}=\nu
  \; \text{ for } \{a\}\times\cI.
\end{aligned}
\end{equation}
It is reasonably easy to see that the  $C^1$ compatibility conditions for the problem
have the form:
 \begin{align}\label{CompatibilityCondNonhomoKirch}
 \begin{aligned}
   &\phi(a_i)=\mu(a_i,0), \quad \psi(a_i)=\partial_t\mu(a_i,0) \quad\text{ for } a_i\in\partial\cG_0,\\
   &\phi, \psi \text{ are continuous at }x=a, \quad \sum_{e\in\cE_0} \phi_e'(a)=\nu(0),
 \end{aligned}
\end{align}
provided $\nu$ is a continuous function on $\cI$.
And indeed these conditions hold for   \eqref{ProblemU1G0}, since $v_{0\gamma}(y,0)=0$ for all $\gamma\in \cE_i^\infty$.
Problem \eqref{ProblemWithNHKirchhoff} admits a unique solution $u\in \cU(\cQ_0)$ \cite{FlyudGolovaty2012}, see also the proof of Lemma~\ref{LemmaAprioryEst}.

By construction,  approximation \eqref{ApproxU0U1} satisfies conditions \eqref{InitCond}-\eqref{Continuity} exactly and \eqref{Kirchhoff} up to terms of order $\eps^{2m_1}$, since $m_1$ is the smallest number among the powers $m_i$, $i=1,\dots,k$. Besides, $v_0$ and $w_0^\eps$ cause small errors in the right hand side of  \eqref{Equation} which are localized in the boundary layer regions.

\section{Formal asymptotic expansions: general terms}\label{Sect4}
Taking into account our work in the previous section, we will look  for  a complete asymp\-totic expansion of $\ue$  in the form
\begin{align}\label{UeExpansionG0}
  &\ue(x,t)\sim
    U_0(x,t)+\sum_{s=1}^\infty\sum_{i=1}^k \eps^{s m_i}U_s^{(i)}(x,t)&& \text{if } (x,t)\in \cQ_0^T,\\\label{UeExpansionG1}
   &\ue(x,t)\sim \sum_{s=0}^\infty \{b^\eps(x)\}^{\frac{s}{2}}\big(u_s(x,t)+v_s(y_\eps,t)+w_s^\eps(x,t)\big)&& \text{if } (x,t)\in \cQ_*^T.
\end{align}
We also assume that all $v_s$ and $w_s^\eps$ are functions of the boundary layer type, i.e., for small $\eps$ the functions $v_s(y_\eps,t)$ are different from zero in a small neighbourhood of the central vertex and $w_s^\eps$ can possess non-zero values in a vicinity of the boundary $\partial\cG_*$ only. The validity of these assumptions will be considered further when we will construct the asymptotics. Recall that $\{b^\eps(x)\}^{\frac{s}{2}}=\eps^{s m_i}$ for $x\in \cG_i$.

 This may happen that some terms in \eqref{UeExpansionG0} as well as in \eqref{U1inKirchhoff} have the same order of smallness. This is because there are equal numbers among the integers $s m_i$. 
Let us introduce the sets $$\Lambda(p)=\{(n,i)\in\mathbb{N}\times\{1,\dots,k\}\colon n m_i=p \}$$
for each natural $p$. Of course, some of them are empty. All sets $\Lambda(p)$ are finite, therefore the terms of the same order in \eqref{UeExpansionG0} can be aggregated in the final form of approximation:
\begin{equation*}
  \sum_{s=1}^\infty\sum_{i=1}^k \eps^{s m_i}U_s^{(i)}=
  \sum_{p=1}^\infty\kern2pt\eps^{p}\kern-4pt\sum_{(s,i)\in\Lambda(p)}\kern-6ptU_s^{(i)}
\end{equation*}

Substituting \eqref{UeExpansionG0} and \eqref{UeExpansionG1} into {\eqref{Equation}--\eqref{Kirchhoff}} and collecting powers of $\eps$ give a sequence of problems for the terms of series. First, the functions $U_s^{(i)}$ are  solutions to the problems
\begin{equation}\label{ProblemUsG0}
\begin{aligned}
  &\partial^2_t U_s^{(i)}-\partial^2_x U_s^{(i)}+qU_s^{(i)}=0 \qquad \text{ in }  \cQ_0,\\
  &U_s^{(i)}=0,  \quad \partial_tU_s^{(i)}=0 \;\text{ on } \cG_0\times\{0\},\quad
  U_s^{(i)}=0  \; \text{ on } \partial\cG_0\times\cI,\\
  &U_s^{(i)}(\,\cdot\,,t) \text{  is continuous at } a \text{  for all } t\in\cI,\\
  & \sum\limits_{e\in\cE_0} \partial_x U_{s,e}^{(i)}=
 -\sum\limits_{e\in\cE_i} \partial_x u_{s-2,e}
 -\sum\limits_{\gamma\in\cE_i^\infty} \partial_{\xi} v_{s-1,\gamma}(O,\cdot)
 \; \text{ on } \{a\}\times\cI
\end{aligned}
\end{equation}
for $s\geq 2$ and $i\in\{1,\dots,k\}$. In our approximation the terms $U_s^{(i)}$ reduce  errors in  the  Kirchhoff condition \eqref{Kirchhoff}, but
without modification of continuity condition \eqref{Continuity}  as well as the initial and boundary value conditions \eqref{InitCond}, \eqref{BoundCond}.  Recall that each subgraph $\cG_i=\{\cV_i,\cE_i\}$ is associated with the non-compact graph  $\cG_i^\infty=\{\cV_i^\infty,\cE_i^\infty\}$, obtained by the  dilatation $\xi=\eps^{-m_i}(x-a)$ as $\eps\to0$. The point $O$ is the origin of  plane $\Real^2$ with the variables $\xi$.

The functions $u_s$ are terms of the regular asymptotics on subgraphs $\cG_1,\dots,\cG_k$.
Since the hyperbolic equation \eqref{Equation} becomes degenerate for $\eps=0$, we can find $u_s$ on each edge $e\in \cE_*$ by solving the Cauchy problems for ordinary differential equations with respect to time
\begin{equation}\label{ProblemUsG2}
\partial^2_t u_s+qu_s= \partial^2_x u_{s-2} \; \text{ in } e\times\cI,\qquad
 u_s=0,  \quad \partial_tu_s=0 \; \text{ on } e\times\{0\}
\end{equation}
for $s\geq1$, where $u_0$ is a solution of \eqref{ProblemU0G2} and $u_{-1}=0$. The task of $u_s$ is to exhaust the residual in the right-hand side of  \eqref{Equation} on more flexible graphs $\cG_i$, $i\geq 1$.
\begin{rem}
Equation \eqref{ProblemUsG2}  is homogeneous  for $s=1$. Then $u_1=0$ by uniqueness, and recursive calculations yield $u_3=0, u_5=0,\dots$.
Hence all $u_s=0$ with the odd index $s$ are zero functions.
\end{rem}

Next, the continuity condition \eqref{Continuity}  can be satisfied asymptotically by means of the boundary layer functions $v_s=v_s(y_\eps,t)$ which are localized about the central vertex.  Given  $i\in\{1,2,\dots,k\}$ and an edge $e\in \cE_i$, we consider the corresponding edge-ray $\gamma$ of $\cG^\infty_i$  and define the restriction of $v_s$ to $\gamma\times \cI$ as a solution of the problem
\begin{align}\label{ProblemVsEqn}
&\partial^2_t v_s -\partial^2_{\xi}v_s+ q_{\gamma}(a)v_s=
-\sum_{r=1}^s\tfrac{1}{r!}\,q^{(r)}_{\gamma}(a)\,\xi^r \,v_{s-r,\gamma}\quad\text{ in } \gamma\times \cI,\\
\label{ProblemVsIniConds}
&v_s=0,\quad \partial_t v_s=0 \; \text{ on } \gamma\times \{0\}, \\
\label{ProblemVsBndConds}
&v_s=\sum_{(r,l)\in\Lambda(sm_i)}\kern-8ptU_r^{(l)}(a,\cdot)-u_{s,e}(a,\cdot) \; \text{ on } \{O\}\times \cI
\end{align}
for all $s=1,2,\dots$.

Finally, we define the map $w_{s}^\eps\colon \cQ_*\to\Real$ that is a collection of boundary layers on each edge $e\in \cE_*$. These boundary layers are localized near the boundary $\partial\cG_*$ and reduce the approximation error in boundary condition \eqref{BoundCond}.
For any $i=1,\dots,k$ and $e\in \cE_i$,  the restriction of $w_{s}^\eps$ to $e$ is given by
\begin{equation*}
  w_{s,e}^\eps(x,t)=w_{s,e}\left(\eps^{-m_i}(x-a_j),t\right),
\end{equation*}
where $w_{s,e}$ solves the problem
\begin{equation}\label{ProblemWs}
\begin{aligned}
&\partial^2_t w_{s,e} -\partial^2_{z}w_{s,e}+ q_{e}(a_j)w_{s,e}=-\sum_{r=1}^s\tfrac{1}{r!} q^{(r)}_{e}(a_j)\,z^r \,w_{s-r,e}\; \text{ in } P,\\
&w_{s,e}(z,0)=0,\quad \partial_t w_{s,e}(z,0)=0 \quad\text{ for } z<0, \\
&w_{s,e}(0,t)=-u_{s,e}(a_j,t) \quad\text{ for } t>0
\end{aligned}
\end{equation}
for all $s\in \mathbb{N}$.
Here $P$ is the quarter-plane $\{(z,t)\colon z<0, \,t>0\}$ and $z=\eps^{-m_i}(x-a_j)$ is a new fast variable on the edge $e$.

\begin{lem}
  Suppose that the initial data of problem \eqref{Equation}--\eqref{Kirchhoff} satisfy conditions \eqref{AssumptionFQ} and \eqref{Fitting01}.
  Then the coefficients $U_s^{(i)}$, $u_s$, $v_s$ and $w_s^\eps$  of formal series \eqref{UeExpansionG0}, \eqref{UeExpansionG1} are unique and can be determined recursively up to arbitrary order $s$ in the class of  continuously differentiable functions which possess locally bounded derivatives of the second order. Furthermore, for $\eps$ small enough and $t\in (0,T)$ the functions $v_s(y_\eps,\,\cdot\,)$ and $w_s^\eps$ are boundary layer corrections which are localized about the central vertex   $a$ and the boundary $\partial\cG_*$ respectively.
\end{lem}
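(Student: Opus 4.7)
The plan is induction on the weight $p\in\mathbb{N}$, where at stage $p$ I determine every term of order $\eps^p$: the regular coefficient $u_s$ on each $\cG_i$ with $sm_i=p$; the correctors $\{U_r^{(l)}\}_{(r,l)\in\Lambda(p)}$ on $\cG_0$; and the boundary-layer terms $v_s$ and $w_s$ with $sm_i=p$. The base case $p=0$ is handled by Lemma~\ref{LemmaU} (producing $U_0,u_0$) together with Lemma~\ref{LemmaV0isBoundaryLayer} and its analogue for $w_0$ via \eqref{ProblemW0} (producing $v_0,w_0^\eps$), all of which are shown to lie in the required smoothness class in Section~\ref{Sect3}.

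For the inductive step I assume that every weight-$q$ term with $q<p$ has been uniquely determined in the stated class and that the previously constructed $v$'s and $w$'s are boundary-layer corrections. The weight-$p$ terms are then built in the strict order $u_s\to U_s^{(i)}\to v_s\to w_s$. The Cauchy problem \eqref{ProblemUsG2} for $u_s$ is integrated explicitly by variation of parameters (direct analogues of \eqref{U0QnonZero}--\eqref{U0QZero}); the forcing $\partial_x^2 u_{s-2}$ is smooth along each $e\in\cE_i$ by hypothesis, so the solution inherits $\dot{C}^\infty$-regularity. Problem \eqref{ProblemUsG0} for $U_s^{(i)}$ is an instance of \eqref{ProblemWithNHKirchhoff} with $\varphi=\psi=\mu=0$ and a continuous Kirchhoff datum $\nu$ assembled from strictly smaller weights $(s-1)m_i$ and $(s-2)m_i$. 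The only nontrivial item in \eqref{CompatibilityCondNonhomoKirch} is $\nu(0)=0$, which holds because $v_{s-1,\gamma}(\xi,0)\equiv0$ and because the sum $\sum_{e\in\cE_i}\varphi_e'(a)$ appearing when $s=2$ vanishes by \eqref{Fitting01}; unique solvability in $\cU(\cQ_0)$ then follows from \cite{FlyudGolovaty2012}.

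With $u_s$ and the relevant $U_r^{(l)}$ at hand, the quarter-plane problem \eqref{ProblemVsEqn}--\eqref{ProblemVsBndConds} for $v_s$ has zero initial data and a boundary value $\sum_{(r,l)\in\Lambda(p)}U_r^{(l)}(a,\cdot)-u_{s,e}(a,\cdot)$ whose value and first time derivative both vanish at $t=0$, so the compatibility conditions \eqref{CompatibilityCondForV} are met; a Duhamel version of \eqref{IntegralRepresentForSoln} furnishes the unique $\cU$-solution. The boundary-layer property follows from the domain-of-dependence argument used in Lemma~\ref{LemmaV0isBoundaryLayer}: by induction each $v_{s-r,\gamma}$ is supported in $\{\xi<t\}$, whence the forcing $-\sum_{r\geq1}\tfrac{1}{r!}q^{(r)}_\gamma(a)\,\xi^r v_{s-r,\gamma}$ inherits the same support, and together with vanishing initial data this forces $v_s$ to vanish for $\xi>t$; returning to the fast variable $y_\eps$ localizes $v_s(y_\eps,\cdot)$ in a $\sqrt{b^\eps}\,t$-neighbourhood of $a$. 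A parallel argument applied to \eqref{ProblemWs} handles $w_{s,e}$ and places its support near $a_j$.

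The principal obstacle is the interlocking dependency within a single weight level $p$: the boundary datum of $v_s$ simultaneously references $u_s$ and the entire family $\{U_r^{(l)}\}_{(r,l)\in\Lambda(p)}$, all of which live at weight $p$. The resolution is precisely the ordering above, which is well-posed because the Kirchhoff datum driving $U_s^{(i)}$ involves only the strictly smaller weights $(s-1)m_i$ and $(s-2)m_i$, so no circular reference arises. A secondary technical point is that the polynomial factor $\xi^r$ in the forcing of \eqref{ProblemVsEqn} must not destroy the boundary-layer property on a fixed interval $(0,T)$; since the compact $\xi$-support of each $v_{s-r}$ persists by the induction hypothesis, the forcing remains bounded and compactly supported in $\xi$ uniformly on $(0,T)$, and the finite-speed-of-propagation argument goes through unchanged.
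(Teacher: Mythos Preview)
Your argument is correct and follows the same skeleton as the paper's own proof: induction, explicit integration of the ODEs \eqref{ProblemUsG2}, verification of the compatibility conditions \eqref{CompatibilityCondNonhomoKirch} and \eqref{CompatibilityCondForV} for the hyperbolic problems, and the domain--of--dependence argument from Lemma~\ref{LemmaV0isBoundaryLayer} for the boundary--layer property.

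The one genuine difference is your choice of induction variable. The paper inducts on the bare index $s$, assuming all $U_r^{(i)},u_r,v_r,w_r^\eps$ with $r<s$ are known and then declaring that ``all right hand sides are already defined'' for \eqref{ProblemVsEqn}--\eqref{ProblemVsBndConds}. Strictly speaking that claim is not justified by induction on $s$ alone: the boundary datum \eqref{ProblemVsBndConds} for $v_s$ on an edge of $\cG_i$ calls for every $U_r^{(l)}$ with $(r,l)\in\Lambda(sm_i)$, and when $m_l<m_i$ this forces $r=sm_i/m_l>s$. Your induction on the weight $p=sm_i$, together with the internal ordering $u_s\to U_s^{(i)}\to v_s\to w_s$ at each level, is exactly what is needed to close this loop, and you correctly isolate it as the ``principal obstacle''. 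So your version is a sharper formalisation of the same proof rather than a different argument; it buys you a clean, acyclic dependency graph at no extra cost.
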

\begin{proof}
  The proof is by induction on $s$. Assume that we have already found the terms  $U_r^{(i)}$, $u_r$, $v_r$ and $w_r^\eps$ for $r<s$ with desired smoothness, where $v_r(y_\eps,\,\cdot\,)$ and $w_r^\eps$ are functions of boundary layer type as $\eps\to 0$. In order to show that the $s$-th step in the recursive process is solvable, we must analyze solvability of problems \eqref{ProblemUsG0}--\eqref{ProblemWs} in  appropriate spaces.

We start with problem \eqref{ProblemUsG2} which admits the explicit solution: $u_s(x,t)=0$ for odd $s$, and
\begin{align}\label{SolOfusQ}
  &u_s(x,t)=\frac{1}{\sqrt{q(x)}} \int_0^t \partial^2_x u_{s-2}(x,\tau)\sin\sqrt{q(x)}(t-\tau)\,d\tau, && \text{ if }  q(x)\neq 0,
 \\\label{SolOfusQ1}
 &u_s(x,t)=\int_0^t  (t-\tau)\partial^2_x u_{s-2}(x,\tau)\,d\tau,&& \text{ if }  q(x)=0
\end{align}
for even $s$ and $(x,t)\in \cQ_*$.
As shown in the proof of Lemma~\ref{LemmaU}, a solution $u_0$ of \eqref{ProblemU0G2} is a smooth function. Hence, all $u_s$ are also $\dot{C}^\infty(\cQ_*)$-functions due to the recursive procedure.

Hyperbolic problem \eqref{ProblemUsG0}  on graph $\cG_0$ contains the non-homogeneous Kirchhoff condition, where the  right hand side is known by induction hypothesis. The problem admits a unique solution $U_s^{(i)}\in \cU(\cQ_0)$, provided the $C^1$ compatibility conditions \eqref{CompatibilityCondNonhomoKirch} hold. In this case, the conditions take the form
\begin{equation*}
\sum\limits_{e\in\cE_i} \partial_x u_{s-2,e}(a,0)+
\sum\limits_{\gamma\in\cE_i^\infty} \partial_{\xi} v_{s-1,\gamma}(O,0)=0,
\end{equation*}
but the last equality is true, because  for $s>2$ the terms $u_{s-2}$ and $v_{s-1}$ satisfy the homogeneous initial conditions at $t=0$ by \eqref{ProblemUsG2} and \eqref{ProblemVsIniConds}. For $s=2$ we have
\begin{equation*}
  \sum\limits_{e\in\cE_i} \partial_x u_{0,e}(a,0)=\sum_{e\in\cE_i} \phi_e'(a)=0  \quad\text{ for } i\in\{0,\dots,k\}
\end{equation*}
by the fitting conditions \eqref{Fitting01} and $v_{1,\gamma}(y,0)=0$ on
$\gamma$ by \eqref{ProblemVsIniConds}.

As for problem \eqref{ProblemVsEqn}--\eqref{ProblemVsBndConds}, we first of all note that all right hand sides are already defined. The problem is the classic mixed problem for hyperbolic equation on the half-line \cite{SaramotoI1970, SaramotoII1970}. There exists a unique solution $v_{s,\gamma}\in \cU(\gamma\times\cI)$ for any edge $\gamma\in \cE_i^\infty$ and $i\in\{1,\dots,k\}$ if the compatibility conditions \eqref{CompatibilityCondForV} hold, i.e.,
\begin{equation*}
  \sum_{(r,l)\in\Lambda(sm_i)}\kern-8ptU_r^{(l)}(a,0)=u_{s,e}(a,0), \qquad \sum_{(r,l)\in\Lambda(sm_i)}\kern-8pt \partial_tU_r^{(l)}(a,0)=\partial_tu_{s,e}(a,0).
\end{equation*}
Both the equalities are true, since $U_r^{(l)}$ and $u_{s,e}$ satisfy the homogeneous initial conditions at $t=0$ as solutions of \eqref{ProblemUsG0} and \eqref{ProblemUsG2} respectively. Next, the right hand side of \eqref{ProblemVsEqn} is identically zero under the characteristic $s-t=0$, by induction. Taking into account the homogeneous initial conditions \eqref{ProblemVsIniConds} and integral representation \eqref{IntegralRepresentForSoln} we conclude that $v_{s,\gamma}$ is  equal to zero for $s>t>0$.

 The similar considerations can be also applied to problem \eqref{ProblemWs}  on the half-line, for which there exists a unique solution $w_{s,e}\in \cU(P)$.
\end{proof}

\section{Justification of the asymptotic expansions}\label{Sect5}

In this section we will prove that  formal series \eqref{UeExpansionG0}, \eqref{UeExpansionG1} actually solve the singularly perturbed problem {\eqref{Equation}--\eqref{Kirchhoff}} in the sense of asymptotic approximation.

\subsection{Estimation of remainder terms}
We introduce the partial sums of \eqref{UeExpansionG0}, \eqref{UeExpansionG1}
\begin{equation}\label{UPonG}
  \uep(x,t)=\begin{cases}
    U_0(x,t)+\sum\limits_{s=1}^p\sum\limits_{i=1}^k \eps^{s m_i}U_s^{(i)}(x,t)& \text{if } (x,t)\in \cQ_0^T,\\
   \sum\limits_{s=0}^{p} \{b^\eps(x)\}^{\frac{s}{2}}\big(u_s(x,t)+v_s(y_\eps,t)+w_s^\eps(x,t)\big)& \text{if } (x,t)\in \cQ_*^T
  \end{cases}
\end{equation}
with all coefficients constructed in Sections \ref{Sect3} and \ref{Sect4}.
Substituting $\uep$ into singularly perturbed problem {\eqref{Equation}--\eqref{Kirchhoff}}, we can estimate the remainder terms in the equation and all conditions. A somewhat lengthy, but not complicated, computation shows that  $\uep$ is a solution of the problem
\begin{align}
\label{EquationUN}
    &\partial^{2}_t\uep -\partial_x( b^\eps \partial_x \uep) +q\uep=f+h^{\eps,p}\quad \text{ in } \cQ_T,  \\
\label{InitCondUN}
    &\uep=\phi,  \quad \partial_t\uep=\psi \quad \text{ on } \cG\times\{0\},\\
\label{BoundCondUN}
    &\uep=\mu  \quad \text{ on } \partial\cG\times\cI_T,\\
\label{ContinuityUN}
    &\uep(\,\cdot\,,t) \text{ is continuous at } x=a \text{ for all } t\in\cI_T,\\
\label{KirchhoffUN}
    & \sum_{e\in\cE} b_{e}^\eps (a)\, \partial_x \uep_e(a,t)=\nu^{\eps,p}(t) \quad \text{  for all } t\in\cI_T.
\end{align}
By construction  the function $\uep$ precisely satisfies the initial and boundary value conditions \eqref{InitCondUN}, \eqref{BoundCondUN} as well as the continuity condition \eqref{KirchhoffUN}. The term $h^{\eps,p}$ in right-hand side of \eqref{EquationUN} is different from zero in a neighbourhood of the central vertex $a$ and the boundary $\partial\cG_*$ only. Moreover, there exists a constant $c_1(T)$ such that
\begin{equation}\label{RemaiderFest}
  |h^{\eps,p}(x,t)|\leq c_1(T)\eps^{(p+1)m_1}\quad\text{ for all } (x,t)\in \cQ_T.
\end{equation}
The remainder $\nu^{\eps,p}$ in the Kirchhoff condition \eqref{KirchhoffUN} is a continuous function and
\begin{equation}\label{RemaiderNUest}
  |\nu^{\eps,p}(t)|\leq c_2(T)\eps^{(p+1)m_1}\quad\text{ for  } t\in \cI_T.
\end{equation}
Recall that $m_1$ is the smallest number in the set $\{m_1,\dots,m_k\}$.
In order to prove that the smallness of  remainders $h^{\eps,p}$ and $\nu^{\eps,p}$ implies the asymptotic smallness of the difference between the exact solution $\ue$ of  {\eqref{Equation}--\eqref{Kirchhoff}} and the approximation $\uep$ as $\eps\to0$ we need some estimates for  solutions of hyperbolic problems on graphs.

\subsection{A priori estimate}
Let us consider the initial boundary value problem on graph $\cG$
\begin{align}\label{APequationG}
   & \partial^{2}_tu -\partial_x( b(x) \partial_x  u) +q(x) u=f(x,t)\quad \text{ in } \cQ_T,  \\
   \label{APinitialCondG}
   & u=\varphi,  \quad \partial_t u=\psi \quad \text{ on } \cG\times\{0\},\\
   \label{APboundaryCondG}
   &  u=0 \quad \text{ on } \partial\cG\times\cI_T,\\
   \label{APcontinuituG}
   &  u(\,\cdot\,,t) \text{ is continuous at } x=a \text{ for all } t\in\cI_T,\\
   \label{APKirchoffG}
   &  \sum_{e\in\cE} b_{e} (a)\, \partial_x  u_e(a,t)=\nu(t) \quad \text{  for all } t\in\cI_T.
\end{align}

Throughout the section, $W_2^l(\Omega)$, $l= 0,1,\dots$, stands for the Sobolev space of functions
defined on a set $\Omega$ which belong to $L^2(\Omega)$ together with their derivatives up to order $l$. In particular, we say that a function $f$ belongs to the  Sobolev space $W_2^l(\cG)$ on graph $\cG$, if its restrictions $f_e$  belong to $W_2^l(e)$ for all edges $e\in\cE$.

\begin{lem}[A priori estimate]\label{LemmaAprioryEst}
  Assume that  the input data of \eqref{APequationG}--\eqref{APKirchoffG} satisfy  conditions \eqref{AssumptionFQ}, \eqref{CompatibilityCondNonhomoKirch} and the coefficient $b$ is constant on each edge $e\in \cE$. If $u$ is a solution of  \eqref{APequationG}--\eqref{APKirchoffG}, then
  \begin{equation}\label{APrioriEstG}
  \|u\|_{W_2^2(\cQ_T)} \leq C(T) \,\left( \|\phi\|_{W_2^1(\cG)}+\|\psi\|_{L_2(\cG)}+\|\nu\|_{L_2(\cI_T)}+\|f\|_{L_2(\cQ_T)} \right)
\end{equation}
for some constant $C(T)$.
\end{lem}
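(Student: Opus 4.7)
The plan is to apply the energy method to the hyperbolic graph problem, with the vertex boundary terms handled through integration by parts on each edge together with the continuity and Kirchhoff conditions. All the trouble is concentrated in the boundary source $\nu$.

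\textbf{First-order energy identity.} I would multiply \eqref{APequationG} by $\partial_t u$ and integrate over $\cG$, interpreting this as a sum over edges. Integration by parts on each edge yields boundary contributions $[b_e\partial_x u_e\,\partial_t u_e]_0^{\ell_e}$. At each pendant vertex $a_j\in\partial\cG$ the Dirichlet condition \eqref{APboundaryCondG} forces $\partial_t u(a_j,t)=0$, so those terms vanish. At the central vertex, the continuity \eqref{APcontinuituG} implies $\partial_t u_e(a,t)=\partial_t u(a,t)$ independently of $e$, and the remaining factor $\sum_{e\in\cE} b_e(a)\partial_x u_e(a,t)$ collapses to $\nu(t)$ by \eqref{APKirchoffG}. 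Using $qu\partial_t u=\tfrac12\partial_t(qu^2)$ (legitimate since $q$ is $t$-independent), one arrives at
\begin{equation*}
\tfrac{1}{2}\tfrac{d}{dt}\bigl(\|\partial_t u\|_{L^2(\cG)}^2+\|\sqrt b\,\partial_x u\|_{L^2(\cG)}^2+\textstyle\int_\cG qu^2\,dx\bigr)=\int_\cG f\,\partial_t u\,dx-\nu(t)\,\partial_t u(a,t).
\end{equation*}
Adding the identity obtained by testing with $\lambda u$ (for $\lambda$ large enough to dominate $\min q$) converts the left-hand side into a coercive energy $E(t)$ controlling $\|\partial_t u(\cdot,t)\|_{L^2(\cG)}^2+\|\partial_x u(\cdot,t)\|_{L^2(\cG)}^2+\|u(\cdot,t)\|_{L^2(\cG)}^2$, since $b$ is a positive constant on each edge.

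\textbf{The main obstacle: the boundary source.} The pairing $\int_0^T\nu(t)\partial_t u(a,t)\,dt$ is only borderline with $\nu\in L^2(\cI_T)$, and this is where real care is required. My approach is to lift the nonhomogeneous Kirchhoff condition. Choose $\rho\in C^\infty(\cG)$ with $\rho(a)=0$, $\rho|_{\partial\cG}=0$, and $\sum_{e\in\cE}b_e(a)\rho_e'(a)=1$, and set $\Phi(x,t)=N(t)\rho(x)$ where $N$ is a mollified primitive-type object built from $\nu$ so that $\|\Phi\|_{W_2^2(\cQ_T)}\leq C\|\nu\|_{L^2(\cI_T)}$. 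The shifted unknown $\tilde u=u-\Phi$ then satisfies the \emph{homogeneous} Kirchhoff condition, with modified right-hand side $\tilde f=f-\partial_t^2\Phi+\partial_x(b\partial_x\Phi)-q\Phi$ and initial data shifted by $\Phi|_{t=0}$, all controlled by the original data. The naive choice $\Phi=\nu(t)\rho(x)$ fails because it requires $\nu(0)$ and $\nu''$; the rigorous implementation proceeds either via mollification of $\nu$ and passage to the limit, or by interpreting the boundary pairing in duality through the trace $\partial_t u(a,\cdot)\in H^{1/2}(\cI_T)$ afforded by the edge-wise trace theorem.

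\textbf{Closing the estimates.} For $\tilde u$ the energy identity above has no boundary contribution, so Gronwall's inequality yields the bound $\|\tilde u\|_{W_2^1(\cQ_T)}^2\leq C(T)\bigl(\|\phi\|_{W_2^1(\cG)}^2+\|\psi\|_{L^2(\cG)}^2+\|\nu\|_{L^2(\cI_T)}^2+\|f\|_{L^2(\cQ_T)}^2\bigr)$ at once. Differentiating \eqref{APequationG} in $t$ and repeating the energy argument with multiplier $\partial_t^2\tilde u$ delivers bounds on $\partial_t^2\tilde u$ and $\partial_t\partial_x\tilde u$. The missing spatial second derivative is recovered pointwise from the equation: on each edge $b_e\partial_x^2\tilde u_e=\partial_t^2\tilde u_e+q_e\tilde u_e-\tilde f_e$, so $\|\partial_x^2\tilde u_e\|_{L^2(\cQ_T^e)}$ is bounded directly, since $b_e$ is a positive constant. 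Reassembling $u=\tilde u+\Phi$ yields \eqref{APrioriEstG}. A secondary concern, pertinent to the application in the next subsection, is tracking how $C(T)$ depends on $\min_e b_e$; since eventually $b=b^\eps$ decays on $\cE_*$, this dependence must be monitored or the estimate refined accordingly if the $W_2^2$ bounds for $u^\eps-u^{\eps,p}$ are to be asymptotically useful.
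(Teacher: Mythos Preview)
Your approach is genuinely different from the paper's. The paper does \emph{not} run an energy argument on the graph; instead it decomposes the problem edge by edge. Writing $\sigma(t):=u(a,t)$ for the common vertex trace (well defined by \eqref{APcontinuituG}), each restriction $u_e$ solves a standard Dirichlet mixed problem on $(0,\ell_j)\times\cI_T$ with boundary datum $\sigma$, for which an interval estimate $\|u_e\|_{W_2^2}\le C(\|\phi_e\|_{W_2^1}+\|\psi_e\|_{L_2}+\|\sigma\|_{L_2}+\|f_e\|_{L_2})$ is quoted from Ladyzhenskaya. The new ingredient is that $\sigma$ itself satisfies a Volterra integral equation of the second kind with continuous kernel, derived by the authors in \cite{FlyudGolovaty2012}, whose right-hand side $F$ is built explicitly from $\phi,\psi,\nu,f$; Volterra theory then yields $\|\sigma\|_{L_2(\cI_T)}\le C(\|\phi\|_{W_2^1}+\|\psi\|_{L_2}+\|\nu\|_{L_2}+\|f\|_{L_2})$, and summing the edge estimates gives \eqref{APrioriEstG}. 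The datum $\nu$ thus enters only through $F$, never through a vertex pairing in an energy identity.

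Your energy route has a real gap at exactly the point you flag. To absorb the Kirchhoff source via a tensor-product lifting $\Phi(x,t)=N(t)\rho(x)$ with $\sum_e b_e(a)\rho_e'(a)=1$ one is forced to take $N=\nu$ (a primitive of $\nu$ would give $\sum_e b_e\partial_x\Phi_e(a,t)=N(t)\ne\nu(t)$), so $\partial_t^2\Phi=\nu''\rho$ and the shifted source $\tilde f$ is controlled only by $\|\nu\|_{W_2^2(\cI_T)}$; mollifying $\nu$ inherits the same blow-up and gives no uniform bound. The duality alternative you sketch is circular as stated: the trace bound $\partial_t u(a,\cdot)\in H^{1/2}(\cI_T)$ follows from $u\in W_2^2(\cQ_T)$, which is what you are proving, while the first-order energy alone only gives $u(a,\cdot)\in H^{1/2}(\cI_T)$, hence $\partial_t u(a,\cdot)\in H^{-1/2}(\cI_T)$, so the pairing $\int_0^T\nu\,\partial_t u(a,\cdot)\,dt$ would need $\nu\in H^{1/2}$ rather than $L_2$. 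Closing the estimate with only $\nu\in L_2$ by energy methods would require a separate 1D hidden-regularity lemma for the vertex trace; the paper's Volterra strategy sidesteps this entirely by tracking $u(a,\cdot)$ rather than $\partial_t u(a,\cdot)$.
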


\begin{proof} The main idea of proof is to decompose the problem on graph $\cG$ into $n$ problems on edges, for which such estimate is a well-known result.

Let $u$ be a solution of \eqref{APequationG}--\eqref{APKirchoffG} belonging to $\cU(\cQ_T)$. We will denote by $\sigma$ the restriction of $u$ to the set $\{a\}\times\cI_T$. Then for each edge $e\in\cE$ the restriction $u_e$ is a solution to the problem
\begin{align}\label{APequationE}
   & \partial^{2}_tu_e -b_e \partial^2_\eta  u_e +q_eu_e=f_e\quad \text{ in } (0,\ell_j)\times\cI_T,  \\
   \label{APinitialCondE}
   & u_e(\eta,0)=\varphi_e(\eta),  \quad \partial_t u_e(\eta,0)=\psi_e(\eta), \quad \eta\in (0,\ell_j),\\
   \label{APboundaryCondE}
   & u_e(0,t)=\sigma(t), \quad u_e(\ell_j,t)=0 \quad t \in \cI_T,
 \end{align}
where $e$ connects the vertices $a$ and $a_j$, and $\eta$ is a point of the interval $(0,\ell_j)$.
It should be stressed that  $\sigma$ is the same function for all edges $e\in\cE$, which is a consequence of continuity condition \eqref{APcontinuituG}.

At the same time, the function $\sigma$ is a solution of the Volterra integral equation of the second kind
\begin{equation}\label{IntegrEquat}
  \sigma(t)-\int_{0}^{t}\mathcal{K}(t,\tau)\sigma(\tau)\,d\tau=F(t)
\end{equation}
with the continuous kernel $\mathcal{K}$ and  right-hand $F$.  This equation was derived and studied by authors in \cite{FlyudGolovaty2012} and the explicit representation of $F$ via the input data of \eqref{APequationG}--\eqref{APKirchoffG} was obtained.
Next, for a solution of \eqref{APequationE}--\eqref{APboundaryCondE} we have the estimate \cite[IV.4]{LadyzhenskayaBVPs}
  \begin{equation}\label{APrioriEstEdge}
  \|u_e\|_{W_2^2(e\times\cI_T)} \leq C_1(T) \,\left( \|\phi\|_{W_2^1(e)}+\|\psi\|_{L_2(e)}+\|\sigma\|_{L_2(\cI_T)}+\|f\|_{L_2(e\times\cI_T)} \right),
\end{equation}
where $e\in\cE$. On the other hand, the equation \eqref{IntegrEquat} admits a unique solution $\sigma$ such that
 \begin{equation}\label{APrioriEstIntegral}
  \|\sigma\|_{L_2(\cI_T)} \leq C_2(T) \,\left( \|\phi\|_{W_2^1(e)}+\|\psi\|_{L_2(e)}+\|\nu\|_{L_2(\cI_T)}+\|f\|_{L_2(e\times\cI_T)} \right),
\end{equation}
since the right-hand side $F$ depends on the input data of \eqref{APequationG}--\eqref{APKirchoffG}.
Inequalities \eqref{APrioriEstEdge} and \eqref{APrioriEstIntegral} combined give the estimate \eqref{APrioriEstG} after summation over $e\in\cE$.
\end{proof}


\subsection{Justification of asymptotics}

We now have the desired result.

\begin{thm}
  Given $T>0$ suppose that  $\ue$ is a weak solution of problem  {\eqref{Equation}--\eqref{Kirchhoff}} in $\cQ_T$. Then $\ue$ admits the asymptotic expansion of the form
  \begin{equation*}
     \ue(x,t)\sim
     \begin{cases}
       \;U_0(x,t)+\sum\limits_{s=1}^\infty\sum\limits_{i=1}^k \eps^{s m_i}U_s^{(i)}(x,t) & \text{if } (x,t)\in \cQ_0^T,\\
       \;\sum\limits_{s=0}^\infty \{b^\eps(x)\}^{\frac{s}{2}}\big(u_s(x,t)+v_s(y_\eps,t)+w_s^\eps(x,t)\big)& \text{if } (x,t)\in \cQ_*^T
     \end{cases}
  \end{equation*}
 with all coefficients constructed in Sections \ref{Sect3} and \ref{Sect4}, namely for any $p=0,1,\dots$  the solution $\ue$ and the approximation $\uep$ given by \eqref{UPonG} satisfy the inequality
 \begin{equation}\label{AsymptoticEst}
   \|\ue-\uep\|_{W_2^2(\cQ_T)}\leq C_p(T)\,\eps^{(p+\frac{1}{2})m_1}
 \end{equation}
with constant $C_p(T)$, being independent of $\eps$.
\end{thm}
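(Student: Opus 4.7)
The plan is to apply the a priori estimate of Lemma~\ref{LemmaAprioryEst} directly to the remainder $r^{\eps,p} := \ue - \uep$, using the smallness of the residuals $h^{\eps,p}$ and $\nu^{\eps,p}$ from Section~5.1. First I would subtract the problem \eqref{EquationUN}--\eqref{KirchhoffUN} satisfied by $\uep$ from the original problem \eqref{Equation}--\eqref{Kirchhoff} satisfied by $\ue$. The difference $r^{\eps,p}$ then solves the system \eqref{APequationG}--\eqref{APKirchoffG} with $b=b^\eps$, homogeneous initial conditions $\phi=\psi=0$, homogeneous Dirichlet data on $\partial\cG$, continuity at the central vertex $a$, right-hand side $-h^{\eps,p}$ in the hyperbolic equation, and inhomogeneous Kirchhoff datum $-\nu^{\eps,p}$. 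Because all four prescribed data vanish at $t=0$, the $C^1$ compatibility conditions \eqref{CompatibilityCondNonhomoKirch} are trivially satisfied, so Lemma~\ref{LemmaAprioryEst} applies and delivers
\begin{equation*}
\|r^{\eps,p}\|_{W_2^2(\cQ_T)} \leq C(T)\bigl(\|h^{\eps,p}\|_{L^2(\cQ_T)} + \|\nu^{\eps,p}\|_{L^2(\cI_T)}\bigr).
\end{equation*}

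Next I would convert the pointwise bounds \eqref{RemaiderFest}, \eqref{RemaiderNUest} into $L^2$ bounds. The crucial observation is that by construction $h^{\eps,p}$ is generated by differentiating the boundary-layer correctors $v_s(y_\eps,\cdot)$ and $w_s^\eps$, so it is supported only in the boundary-layer neighbourhoods of the central vertex $a$ and of $\partial\cG_*$. Since the widest such region has $x$-extent $O(\eps^{m_1})$, combining $|h^{\eps,p}|\leq c_1(T)\,\eps^{(p+1)m_1}$ with this support estimate gives $\|h^{\eps,p}\|_{L^2(\cQ_T)}\leq C_1(T)\,\eps^{(p+3/2)m_1}$. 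The Kirchhoff residual $\nu^{\eps,p}$ is a function of $t$ alone and the pointwise bound \eqref{RemaiderNUest} yields $\|\nu^{\eps,p}\|_{L^2(\cI_T)}\leq C_2(T)\,\eps^{(p+1)m_1}$ directly.

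The nontrivial part is to control the $\eps$-dependence of the constant $C(T)$ of Lemma~\ref{LemmaAprioryEst}, since the coefficient $b^\eps=\eps^{2m_i}$ degenerates on $\cG_i$ for $i\geq 1$. I would unpack the proof of the lemma: for each $e\in\cE_i$, the edge-wise estimate \eqref{APrioriEstEdge} for the equation $\partial_t^2 u_e-\eps^{2m_i}\partial_x^2 u_e+q_e u_e=f_e$ is obtained by rescaling $\tilde x=\eps^{-m_i}x$ to unit wave speed, invoking the classical $W_2^2$ estimate on the stretched interval, and transforming back — a process which inflates the constant by a negative power of $\eps$ and, correspondingly, turns an $L^2(dx)$-bound on the data into an $L^2(d\tilde x)$-bound with a factor $\eps^{-m_i/2}$. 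A parallel analysis of the Volterra equation \eqref{IntegrEquat} shows that the trace $\sigma$ inherits the same kind of $\eps$-dependence from the input data.

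Feeding the bounds of the previous paragraph through this rescaled version of the a priori estimate, with $m_1$ being the worst (smallest) exponent, yields the asserted rate $\|r^{\eps,p}\|_{W_2^2(\cQ_T)}\leq C_p(T)\,\eps^{(p+1/2)m_1}$: the half-power loss compared with the naive $\eps^{(p+1)m_1}$ comes precisely from the change of $L^2$-norm under the boundary-layer rescaling. The main obstacle I foresee is exactly this bookkeeping — keeping the constants uniform in $\eps$ after the rescaling on each $\cG_i$ and checking that the coupling at the central vertex through \eqref{IntegrEquat} does not produce any worse dependence than $\eps^{-m_1/2}$; everything else reduces to the standard $W_2^2$ theory of the wave equation on an interval, already cited in \cite{LadyzhenskayaBVPs}.
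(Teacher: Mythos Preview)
Your opening moves are fine: the difference $r^{\eps,p}=\ue-\uep$ does solve \eqref{APequationG}--\eqref{APKirchoffG} with homogeneous initial and Dirichlet data and with right-hand sides $-h^{\eps,p}$, $-\nu^{\eps,p}$, and the compatibility conditions are trivially satisfied. The gap is in your handling of the $\eps$-dependence of the constant in Lemma~\ref{LemmaAprioryEst}. The degeneracy of $b^\eps$ is \emph{worst} on $\cG_k$, where $b^\eps=\eps^{2m_k}$ with $m_k=\max_i m_i$, and the a priori constant blows up like a negative power of this quantity; the paper states explicitly that $C(T)$ is inversely proportional to $\min_{x\in\cG} b^\eps(x)=\eps^{2m_k}$. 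Your rescaling heuristic, even if carried out carefully, would produce a factor of order $\eps^{-3m_k/2}$ in passing from the $W_2^2$ norm in the stretched variable back to the original one (two $x$-derivatives cost $\eps^{-2m_k}$, the Jacobian returns $\eps^{m_k/2}$). Combining this with residuals of size $\eps^{(p+1)m_1}$ gives an exponent $(p+1)m_1-\tfrac32 m_k$, which can be arbitrarily negative for fixed $p$ when $m_k\gg m_1$. In short, you have conflated ``worst exponent for the residual'' ($m_1$, the smallest) with ``worst exponent for the a priori constant'' ($m_k$, the largest); there is no reason the half-power loss should involve $m_1$ rather than $m_k$ in your scheme.

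The paper sidesteps this bookkeeping entirely by a standard Vishik--Lyusternik overshoot. It accepts the crude bound $\|\ue-u^{\eps,p+r}\|_{W_2^2}\le c\,\eps^{-2m_k}\cdot\eps^{(p+r+1)m_1}$ and simply chooses $r$ so large that $rm_1\ge 2m_k$, which yields $\|\ue-u^{\eps,p+r}\|_{W_2^2}\le c\,\eps^{(p+1)m_1}$. Then it writes $\|\ue-\uep\|\le\|\ue-u^{\eps,p+r}\|+\|u^{\eps,p+r}-\uep\|$ and estimates the second term \emph{directly}, since $u^{\eps,p+r}-\uep$ is an explicit finite sum of boundary-layer terms $\eps^{sm_i}(u_s+v_s+w_s^\eps)$ with $s\ge p+1$. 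The half-power loss $\eps^{(p+\frac12)m_1}$ arises here, from computing $\|\partial_x^2 v_{p+1}(y_\eps,\cdot)\|_{L^2}$ on a layer of width $\eps^{m_i}$, not from any tracking of constants in the a priori estimate. This decoupling of the two issues is the idea your plan is missing.
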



\begin{proof}
Our proof starts with the observation that the difference $\uep-\ue$ solves the problem \eqref{EquationUN}--\eqref{KirchhoffUN} with $\phi$, $\psi$, $\mu$ and $f$ replaced by zero functions. Therefore this difference can be estimated by the corresponding norms of remainder terms $h^{\eps,p}$ and $\nu^{\eps,p}$ using  estimate \eqref{APrioriEstG}. But one must be careful with this estimate, because the constant $C(T)$ is inversely proportional to the ellipticity bound of \eqref{APequationG}. In the case of problem \eqref{EquationUN}--\eqref{KirchhoffUN} we have $\min_{x\in\cG}b^\eps(x)=\eps^{2m_k}$ for small $\eps$, where $m_k=\max_i\{m_i\}$. Hence,
  \begin{equation*}
    \|\ue-\uep\|_{W_2^2(\cQ_T)}\leq c_1\eps^{-2m_k} \left(\|\nu^{\eps,p}\|_{L_2(\cI_T)}+\|h^{\eps,p}\|_{L_2(\cQ_T)} \right)\leq c_2\eps^{(p+1)m_1-2m_k},
  \end{equation*}
by \eqref{RemaiderFest} and \eqref{RemaiderNUest}. In order to improve the estimate, we consider the last inequality for bigger number $p+r$. Then
 \begin{equation*}
    \|\ue-u^{\eps,p+r}\|_{W_2^2(\cQ_T)}\leq  c_2\eps^{(p+1)m_1+(rm_1-2m_k)}\leq  c_3\eps^{(p+1)m_1},
  \end{equation*}
provided $rm_1\geq 2m_k$. From this we readily deduce  that
 \begin{equation*}
    \|\ue-u^{\eps,p}\|_{W_2^2(\cQ_T)}-\|u^{\eps,p+r}-\uep\|_{W_2^2(\cQ_T)}\leq  c_3\eps^{(p+1)m_1},
  \end{equation*}
and hence that
 \begin{equation*}
    \|\ue-u^{\eps,p}\|_{W_2^2(\cQ_T)}\leq  c_3\eps^{(p+1)m_1}+\|u^{\eps,p+r}-\uep\|_{W_2^2(\cQ_T)}.
  \end{equation*}
It is worth remarking at this stage that the $W_2^2$ norms of boundary layers $v_s(y_\eps,\,\cdot\,)$ and $w_s^\eps$ are  infinite large as $\eps\to0$. Moreover, the contributions in the norm from the second derivatives on $x$ are the largest. Therefore
\begin{equation*}
    \|u^{\eps,p+r}-\uep\|_{W_2^2(\cQ_T)}\leq c_4 \left(\|(b^\eps)^{\frac{p+1}{2}}\partial^2_xv_{p+1}(y_\eps,\,\cdot\,)\|_{L^2(\cQ_*^T)}
    +\|(b^\eps)^{\frac{p+1}{2}}\partial^2_xw_{p+1}^\eps\|_{L^2(\cQ_*^T)}\right).
\end{equation*}
We will derive an estimate for the first norm in the right hand side of the last inequality. The second one can be bounded similarly.
We have
\begin{multline*}
  \|(b^\eps)^{\frac{p+1}{2}}\partial^2_xv_{p+1}(y_\eps,\,\cdot\,)\|_{L^2(\cQ_*^T)}^2 =
  \sum_{i=1}^k \eps^{2m_i(p+1)}\sum\limits_{\gamma\in\cE_i} \|\partial^2_xv_{p+1,\gamma}(y_\eps,\,\cdot\,)\|_{L^2(\gamma\times\cI_T)}^2
\\  \leq c_5 \sum_{i=1}^k \eps^{2m_i(p+1)}\eps^{-m_i}\leq c_6 \eps^{(2p+1)m_1},
\end{multline*}
since
\begin{multline*}
  \|\partial^2_xv_{s,\gamma}(y_\eps,\,\cdot\,)\|_{L^2(\gamma\times\cI_T)}^2=\int_0^T\int_\gamma|\partial^2_xv_{s,\gamma}(\eps^{-m_i}(x-a_j) ,t)|^2\,d\gamma\,dt \\
  =\eps^{-2m_i}\int_0^T\int_0^{\eps^{m_i}}|\partial^2_\xi v_{s,\gamma}(\eps^{-m_i}\pi_\gamma(\alpha) ,t)|^2\,d\alpha\,dt\\
  =\eps^{-m_i}\int_0^T\int_0^1|\partial^2_\xi v_{s,\gamma}(\xi ,t)|^2\,d\xi\,dt\leq c_7\eps^{-m_i}
\end{multline*}
for any $\gamma\in \cE_i$, where $\pi_\gamma\colon [0,\ell_j]\to \gamma$ is the natural parametrization of the  edge $\gamma$.
Here we also used the main property of boundary layer $v_s(y_\eps,\,\cdot\,)$  to be different from zero in the $\sqrt{b^\eps}\, t$-neigh\-bour\-hood of vertex $a$ only.
Hence
\begin{equation}\label{Up+r-UpEst}
   \|u^{\eps,p+r}-\uep\|_{W_2^2(\cQ_T)}\leq c_8\, \eps^{(p+\frac{1}{2})m_1}.
\end{equation}
Combining the previous inequalities now yields \eqref{AsymptoticEst}.
\end{proof}

\paragraph{\bf Acknowledgment}
The authors would like to thank Ruslan Andrusiak for helpful discussions. We are also greatly indebted to  Referee
for carefully reading the paper and suggesting some improvements.

\bibliographystyle{amsplain}

\end{document}